
\documentclass[12pt,a4paper,twoside]{amsart}

\usepackage{amssymb}

\textwidth 16cm
\textheight 22cm

\oddsidemargin 0cm \evensidemargin 0cm

\newcommand{\Comp}{{\rm Comp }}
\newcommand{\R}{\operatorname{Re}}
\newcommand{\cinterv}[1]{\mathopen[{#1}\mathclose]}
\newcommand{\cointerv}[1]{\mathopen[{#1}\mathclose)}
\newcommand{\ocinterv}[1]{\mathopen({#1}\mathclose]}
\newcommand{\closure}[1]{\overline{#1}}
\newcommand{\D}{\mathbb{D}}
\newcommand{\T}{\mathbb{T}}
\newcommand{\Hplane}{\mathbb{H}}
\newcommand{\abs}[1]{\lvert#1\rvert}
\newcommand{\bigabs}[1]{\bigl\lvert#1\bigr\rvert}
\newcommand{\biggabs}[1]{\biggl\lvert#1\biggr\rvert}
\newcommand{\boundary}{\partial}
\newcommand{\norm}[1]{\lVert#1\rVert}
\newcommand{\bignorm}[1]{\bigl\lVert#1\bigr\rVert}
\newcommand{\epsil}{\varepsilon}

\numberwithin{equation}{section}

\newtheorem{lemma}{Lemma}[section]

\newtheorem{theorem}[lemma]{Theorem}
\newtheorem*{mainteo}{Main Theorem}
\newtheorem{proposition}[lemma]{Proposition}
\newtheorem{problem}[lemma]{Problem}
\newtheorem{ejemplo}[lemma]{Example}

\theoremstyle{remark}

\newtheorem{remark}[lemma]{Remark}


\begin{document}

\title[On the connected component of compact composition operators]
{On the connected component of compact composition operators on the
Hardy space}

\author[E.\ A.\ Gallardo-Guti\'errez]{Eva A.\ Gallardo-Guti\'errez}
\address{Departamento de Matem\'aticas, Universidad de Zaragoza,
Plaza San Francisco s/n, 50009 Zaragoza, Spain.}
\email{eva@unizar.es}
\author[M.\ J.\ Gonz\'alez]{Mar\'{\i}a J.\ Gonz\'alez}
\address{Departamento de Matem\'aticas, Universidad de C\'adiz,
Apartado 40, 11510 Puerto Real (C\'adiz), Spain.}
\email{majose.gonzalez@uca.es}
\author[P.\ J.\ Nieminen]{Pekka J.\ Nieminen}
\address{Department of Mathematics and Statistics, University of
Helsinki, PO Box 68, FI-00014 Helsinki, Finland.}
\email{pjniemin@cc.helsinki.fi}
\author[E.\ Saksman]{Eero Saksman}
\address{Department of Mathematics and Statistics, University of
Jyv\"askyl\"a, PO Box 35, FI-40014 Jyv\"askyl\"a, Finland.}
\email{saksman@maths.jyu.fi}

\thanks{First author was partially supported by Plan Nacional I+D grant
no.\ MTM2006-06431 and Gobierno de Arag\'on research group
\emph{An\'alisis Matem\'atico y Aplicaciones}, ref.\ DGA E-64.
Second author was partially supported by Plan Nacional I+D grant
no.\ MTM2005-00544 and 2005SGR00774. Third author was supported by
the Finnish Graduate School in Mathematical Analysis and Its
Applications, and the Academy of Finland, project no.\ 118422.
Fourth author was supported by the Academy of Finland, projects
no.\ 113826 and 118765.}

\subjclass[2000]{Primary 47B33. Secondary 30D55, 47B38.}
\keywords{composition operator, Aleksandrov--Clark measures,
compact operator}
\date{15 June 2007.}


\begin{abstract}
We show that there exist non-compact composition operators in
the connected component of the compact ones on the classical Hardy
space $\mathcal{H}^2$.  This answers a question posed by Shapiro and
Sundberg in 1990.  We also establish an improved version of a theorem
of MacCluer, giving a lower bound for the essential norm of a
difference of composition operators in terms of the angular
derivatives of their symbols.  As a main tool we use
Aleksandrov--Clark measures.
\end{abstract}

\maketitle

\section{Introduction}
\label{sec:Intro}

Let $\D$ denote the open unit disc of the complex plane
and $\mathcal{H}^2$ the classical Hardy space, that is, the space
of analytic functions $f$ on $\D$ for which the norm
\[
\norm{f}_2 = \left (\sup_{0\leq r<1} \int_{0}^{2\pi}
\abs{f(re^{i\theta})}^2 \, \frac{d\theta}{2\pi}\right )^{1/2}
\]
is finite. By a variant of Fatou's theorem, any Hardy function $f$
has non-tangential limits on the boundary of the unit disc except on
a set Lebesgue measure zero (see \cite{Du}, for instance).   Moreover,
$\norm{f}_2$ equals the $L^2$-norm of the boundary function.
Throughout
this work, $f(e^{i\theta})$ will denote the non-tangential limit of
$f$ at $e^{i\theta}$.

If $\varphi$ is an analytic map which takes $\D$ into itself, a result
proved by Littlewood in 1925 ensures that the composition operator
induced by $\varphi$,
\[
C_{\varphi} f = f\circ \varphi,
\]
is always a bounded linear operator on $\mathcal{H}^2$. The properties
of such operators on $\mathcal{H}^2$ and many other function spaces
have been studied extensively during the past few decades.  We refer
the reader to the monographs \cite{CMc,ShBook} for an
overview of the field as of the early 1990s.

Starting from Earl Berkson's pioneering work \cite{Be}, many authors
have focused attention on the topological structure of the set
$\Comp(\mathcal{H}^2)$ of all composition operators on
$\mathcal{H}^2$.  Here $\Comp(\mathcal{H}^2)$ is usually
equipped with the metric induced by the operator norm.
A remarkable contribution in this area is due to Joel H.\ Shapiro and
Carl Sundberg~\cite{ShSuIso}, who provided several results and examples
to describe the isolated members of $\Comp(\mathcal{H}^2)$.
Towards the end of their paper, they also raised the general problem of
determining the connected components of $\Comp(\mathcal{H}^2)$, and
suggested the following conjecture:
\begin{itemize}
\item[(A)]
\emph{$C_\varphi$ and $C_\psi$ lie in the same component of
$\Comp(\mathcal{H}^2)$ if and only if $C_\varphi-C_\psi$ is compact.}
\end{itemize}

The most important special case of this conjecture, mentioned
explicitly in~\cite{ShSuIso}, states that the compact composition
operators themselves form a component in $\Comp(\mathcal{H}^2)$.
In fact, Shapiro and Sundberg observed that the collection of the
compact composition operators on $\mathcal{H}^2$ is arcwise connected,
so the remaining question can be stated as follows:
\begin{itemize}
\item[(B)]
\emph{Let $\Comp_K(\mathcal{H}^2)$ be the component of
$\Comp(\mathcal{H}^2)$ that contains all the compact composition
operators.  Does any non-compact composition operator belong to
$\Comp_K(\mathcal{H}^2)$?}
\end{itemize}

The general form (A) of the Shapiro--Sundberg conjecture has recently
been answered negatively by Moorhouse and Toews~\cite{MoTo} and
Bourdon~\cite{Bo}.  They have provided fairly simple and concrete
examples of symbols $\varphi$ and $\psi$ such that the operators
$C_\varphi$ and $C_\psi$ lie in the same component of
$\Comp(\mathcal{H}^2)$ but have a non-compact difference.  However, in
those examples both operators are non-compact, leaving question~(B)
unanswered.

In this work, we will show that the special case of the
Shapiro--Sundberg conjecture fails, too.  That is, we will give an
affirmative answer to question~(B).

\begin{mainteo}
For $0 \leq t \leq 1$ there are analytic maps $\varphi_t\colon
\D \to \D$ such that $t \mapsto C_{\varphi_t}$ is
a continuous map from $[0,1]$ into $\Comp(\mathcal{H}^2)$,
where $C_{\varphi_0}$ is compact and $C_{\varphi_1}$ is
non-compact on $\mathcal{H}^2$.
\end{mainteo}

Let us point out an important result of Barbara
MacCluer~\cite{Mc} which states that if two composition operators
belong to the same component in $\Comp(\mathcal{H}^2)$, then their
symbols must have the same angular derivative (possibly infinity) at
each point of the unit circle $\T = \partial\D$.
Hence any symbol that induces an operator belonging to
$\Comp_K(\mathcal{H}^2)$ cannot have a finite angular derivative at
any point of $\T$.  This indicates that the construction of the
map $\varphi_1$ above is probably not an elementary task.
In particular, since non-existence of finite angular derivatives
characterizes compact composition operators induced by finitely
valent symbols, the valence of $\varphi_1$ has to be infinite.

As a main tool in the proof of Main Theorem we will employ
Aleksandrov--Clark measures.  These measures, associated to any
analytic self-map of the unit disc, have lately found several
applications in the study of composition operators (see
Section~\ref{sec:AC}).  The essence of our argument comprises a
construction of a family of certain continuously singular measures
on $\T$, one for each point of $\cinterv{0,1}$, which are then used to
define the maps $\varphi_t$ in terms of their Aleksandrov--Clark
measures.

The rest of the paper is organized as follows.  In
Section~\ref{sec:AC}, we collect some preliminaries on
Aleksandrov--Clark measures and composition operators.
In Section~\ref{sec:Mac}, we revisit the theorem of MacCluer cited
above and strengthen it slightly.  This result will provide a clue
for part of the proof of our Main Theorem, which is then carried
out in Section~\ref{sec:Main} (see, in particular,
Remark~\ref{re:Heur}).  Finally, in Section~\ref{sec:Further}
we make some additional observations related to Main Theorem
and also present some open questions that arise from our work.

We finally remark that the questions raised by Shapiro and Sundberg
have been studied in many classical function spaces besides the
original $\mathcal{H}^2$.  See, for example,
\cite{McOhZh,AGL,HO,HaMac,Mo,KrMo}.  In most cases the situation seems
to be considerably easier than in the setting of $\mathcal{H}^2$.  In
particular, for the standard Bergman space $\mathcal{A}^2$, MacCluer's
theory shows that the compact composition operators do form a component
of $\Comp(\mathcal{A}^2)$ (see Remark~\ref{re:Bergman}).  Also, in the
setting of $\mathcal{H}^\infty$, the space of bounded analytic
functions, a complete description of the component structure of
$\Comp(\mathcal{H}^\infty)$ was found in~\cite{McOhZh}.

\section{Aleksandrov--Clark measures}
\label{sec:AC}

In this section we collect some preliminaries and background on
Aleksandrov--Clark measures and their relation to composition
operators.  For more information on these measures and their
applications in other areas of analysis, we refer the reader to
the lecture notes \cite{Sa}, the book \cite{CMR}
and the surveys \cite{MaSt,PoSa}.

\subsection{Definition}
\label{sec:ACDef}

Let $\varphi$ be an analytic self-map of $\D$. For any
$\alpha \in \T$, the real part of the function
$(\alpha+\varphi)/(\alpha-\varphi)$ is positive and harmonic in
$\D$, so it may be expressed as the Poisson integral of a
positive Borel measure $\tau_{\varphi,\alpha}$ supported on
$\T$.  That is,
\[
   \R \frac{\alpha+\varphi(z)}{\alpha-\varphi(z)}
   = \frac{1-\abs{\varphi(z)}^2}{\abs{\alpha-\varphi(z)}^2}
   = \int_{\T} P_z \,d\tau_{\varphi,\alpha},
\]
where $P_z(\zeta) = (1-\abs{z}^2)/\abs{\zeta-z}^2$ is the Poisson
kernel for $z \in \D$.  The family of measures
$\{ \tau_{\varphi,\alpha}: \alpha\in\T \}$ are called the
\emph{Aleksandrov--Clark measures} associated to $\varphi$.

For any Borel measure $\tau$ on $\T$, we write
$\tau = \tau^a \,dm + \tau^s$ for the Lebesgue decomposition
of $\tau$, so that $\tau^a$ is the density of the absolutely
continuous part, $m$ is the normalized Lebesgue measure on $\T$
and $\tau^s$ is singular.  It follows from the basic properties of
Poisson integrals that
\[
    \tau_{\varphi,\alpha}^a(\zeta) =
    \frac{1-\abs{\varphi(\zeta)}^2}{\abs{\alpha-\varphi(\zeta)}^2}.
\]
Furthermore, $\tau_{\varphi,\alpha}^s$ is carried by the set where
$\varphi(\zeta) = \alpha$.

\subsection{Angular derivatives}
\label{sec:Angular}

Recall that if the quotient $(\varphi(z)-\eta)/(z-\zeta)$ has a
finite non-tangential limit at $\zeta \in \T$ for some $\eta \in \T$,
then this limit is called the \emph{angular derivative} of $\varphi$
at $\zeta$ and denoted by $\varphi'(\zeta)$.  It satisfies
$\varphi'(\zeta) = \abs{\varphi'(\zeta)} \overline{\zeta}\eta$
where $\eta = \varphi(\zeta)$.  A nice feature of the
Aleksandrov--Clark measures is that their discrete parts (i.e.\ mass
points, or atoms) have a perfect correspondence with the finite
angular derivatives of $\varphi$:

\begin{itemize}
\item
\emph{The map $\varphi$ has a finite angular derivative at
$\zeta \in \T$
if and only if there is $\alpha \in \T$ such that
$\tau_{\varphi,\alpha}(\{\zeta\}) > 0$.  In that case
$\varphi(\zeta) = \alpha$ and $\abs{\varphi'(\zeta)} =
\tau_{\varphi,\alpha}(\{\zeta\})^{-1}$.}
\end{itemize}
For the proof of this result convenient references are \cite{CMR,Sa},
where it is established in conjunction with the classical
Julia--Carath\'{e}odory theorem.

\subsection{Relation to composition operators}
\label{sec:CompOp}

To bring Aleksandrov--Clark measures into the theory of composition
operators, we follow
Sarason's~\cite{Sar} idea of describing composition operators as
integral operators acting on the unit circle.  Let us denote
by $\mathcal{M}$ the space of all complex Borel measures on $\T$
endowed with the total variation norm.  Then, if $\mu \in \mathcal{M}$
is given, the Poisson integral $u(z) = \int_\T P_z \,d\mu$
defines a harmonic function on $\D$.  Consequently the function
$v = u \circ \varphi$ is also harmonic, and it follows easily that
$v$ is the Poisson integral of a unique measure
$\nu \in \mathcal{M}$.  Thus it makes sense to define
$C_\varphi\mu = \nu$. One can show that
$C_\varphi\colon \mathcal{M} \to \mathcal{M}$ is bounded and,
furthermore, that $C_\varphi$ restricts to a bounded operator
$L_p \to L_p$, where $L_p = L_p(\T,m)$ for $1 \leq p \leq \infty$.
Moreover, the restriction of $C_\varphi$ to the analytic Hardy spaces
$\mathcal{H}^p$ (viewed as subspaces of $L^p$) coincides with the
standard definition of $C_\varphi$.

By the definition of the Aleksandrov--Clark measures we see that
$\tau_{\varphi,\alpha } = C_\varphi \delta_\alpha$, where
$\delta_\alpha$ is the $\delta$-Dirac measure at $\alpha$.  In
addition, the correspondence $C_\varphi\mu = \nu$ can be written as
\begin{equation} \label{eq:IntOper}
    \int_\T f\,d\nu =
    \int_\T \biggl( \int_\T f\,d\tau_{\varphi,\alpha} \biggr)
            \,d\mu(\alpha)
\end{equation}
for a suitable class of functions $f$.  Indeed, if $f$ is a Poisson
kernel $P_z$, this follows directly from the definitions.  The case
of continuous $f$ is then obtained by approximating with linear
combinations of Poisson kernels.  Finally one may invoke a further
approximation argument
(e.g.\ a monotone class theorem; cf.\ \cite[Sec.~9.4]{CMR}) to
establish \eqref{eq:IntOper} for all bounded Borel functions $f$ on
$\T$.

In~\cite{Sar} Sarason characterized those composition operators
$C_\varphi$ that are compact on $\mathcal{M}$ and $L^1$ by a condition
which says that $\tau_{\varphi,\alpha}^s = 0$ for all $\alpha \in \T$;
that is, the Aleksandrov--Clark measures of $\varphi$ are required to
be absolutely continuous.  Later Shapiro and Sundberg~\cite{ShSuL1}
observed that Sarason's criterion is equivalent to
Shapiro's~\cite{ShEss} characterization of compact composition
operators on $\mathcal{H}^p$, $1 \leq p < \infty$, involving
the Nevanlinna counting function.  Moreover, Cima and
Matheson~\cite{CiMa} have
shown that the essential norm (i.e.\ distance, in the operator norm,
from the compact operators) of any $C_\varphi$ acting on $\mathcal{H}^2$
equals $\sup_\alpha\norm{\tau_{\varphi,\alpha}^s}^{1/2}$.  In
particular, a necessary condition for the compactness of $C_\varphi$
on all the spaces mentioned is that the symbol $\varphi$ has no finite
angular derivative at any point of $\T$.  This condition, however, is
not sufficient unless $\varphi$ is of finite valence (see e.g.\
\cite{ShBook}).

Aleksandrov--Clark measures have also been used to study differences
and more general linear combinations of composition operators
in \cite{KrMo,NiSa,JESh}.  In particular, a characterization for
compact differences of composition operators on $\mathcal{M}$ and
$L^1$ was found in~\cite{NiSa}.

\section{Extension of MacCluer's Theorem}
\label{sec:Mac}

In 1989 Barbara MacCluer obtained the following result concerning
differences of composition operators on $\mathcal{H}^2$.

\begin{theorem}[MacCluer~\cite{Mc}] \label{thm:Mac}
Assume that $\varphi,\psi\colon \D \to \D$ are analytic maps and
$\varphi$ has a finite angular derivative at $\zeta \in \T$.  Then,
unless $\psi(\zeta) = \varphi(\zeta)$ and
$\psi'(\zeta) = \varphi'(\zeta)$, one has
\[
    \norm{C_\varphi-C_\psi}_e^2 \geq \frac{1}{\abs{\varphi'(\zeta)}},
\]
where $\norm{\ }_e$ denotes the essential norm of an operator
on $\mathcal{H}^2$.
\end{theorem}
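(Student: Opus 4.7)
The plan is to use the standard lower bound $\norm{T}_e \geq \limsup_n \norm{T^* g_n}$ valid for any weakly null sequence $\{g_n\} \subset \mathcal{H}^2$ with $\norm{g_n}_2 \leq 1$, applied to $T = C_\varphi - C_\psi$. As test functions I take the normalised reproducing kernels $f_w(z) = (1-\abs{w}^2)^{1/2}/(1-\bar{w}z)$, $w \in \D$, which have unit norm and tend weakly to $0$ as $\abs{w} \to 1^-$. Since $C_\varphi^* k_w = k_{\varphi(w)}$ and similarly for $\psi$, a direct expansion gives
\[
   \norm{(C_\varphi - C_\psi)^* f_w}_2^2 = \alpha_w + \beta_w - 2\R \gamma_w,
\]
where $\alpha_w = (1-\abs{w}^2)/(1-\abs{\varphi(w)}^2)$, $\beta_w$ is defined analogously for $\psi$, and $\gamma_w = (1-\abs{w}^2)/(1-\overline{\varphi(w)}\psi(w))$. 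By the Julia--Carath\'{e}odory theorem, $\alpha_w \to 1/\abs{\varphi'(\zeta)}$ whenever $w \to \zeta$ non-tangentially, and the task is to choose $w \to \zeta$ so that $\beta_w - 2\R\gamma_w$ does not spoil this lower bound.

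I would split into three cases according to the behaviour of $\psi$ at $\zeta$. If $\psi$ has no finite angular derivative at $\zeta$, then $\beta_w \to 0$ for every sequence $w \to \zeta$ (the Julia--Carath\'{e}odory characterisation), and the Cauchy--Schwarz bound $\abs{\gamma_w} \leq (\alpha_w\beta_w)^{1/2}$ forces $\gamma_w \to 0$; the radial approach $w = r\zeta$ then suffices to yield the limit $1/\abs{\varphi'(\zeta)}$. If $\psi$ has a finite angular derivative at $\zeta$ but $\psi(\zeta) \neq \varphi(\zeta)$, then along radial $w \to \zeta$ the denominator $1-\overline{\varphi(w)}\psi(w)$ stays bounded away from $0$, forcing $\gamma_w \to 0$ and producing the limit $1/\abs{\varphi'(\zeta)} + 1/\abs{\psi'(\zeta)}$.

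The delicate situation is $\psi(\zeta) = \varphi(\zeta) = \eta$ with $a := \abs{\varphi'(\zeta)} \neq \abs{\psi'(\zeta)} =: b$. A direct radial computation based on the Julia--Carath\'{e}odory expansions $\varphi(w) = \eta + a\bar{\zeta}\eta(w-\zeta)(1+o(1))$ and its analogue for $\psi$ produces only the limit $(a-b)^2/(ab(a+b))$, which is strictly smaller than $1/a$. To overcome this I would instead approach $\zeta$ along non-tangential rays $w = \zeta(1-\epsil e^{i\theta})$ with $\theta \in (0,\pi/2)$ close to $\pi/2$. The same expansion then shows that $\R\gamma_w \to 2(a+b)\cos^2\theta/[(a+b)^2\cos^2\theta + (a-b)^2\sin^2\theta]$ as $\epsil \to 0^+$; this limit vanishes as $\theta \to \pi/2^-$, while $\alpha_w \to 1/a$ and $\beta_w \to 1/b$ along every such ray. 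A diagonalisation in $(\theta,\epsil)$ then delivers a single weakly null sequence $\{f_{w_n}\}$ along which $\norm{(C_\varphi - C_\psi)^* f_{w_n}}_2^2 \to 1/a + 1/b \geq 1/a$.

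The main obstacle is precisely this third case: the radial test already fails, and the diagonal sequence eventually exits every fixed Stolz angle at $\zeta$. One must therefore choose $\epsil_n$ at step $n$ small enough, depending on the chosen $\theta_n \to \pi/2^-$, so that the Julia--Carath\'{e}odory limits for $\alpha$ and $\beta$ survive along the sequence, while $\abs{w_n} \to 1$ is maintained to keep $\{f_{w_n}\}$ weakly null. Once this bookkeeping is in place the three cases combine to give the stated bound; in fact they deliver the sharper inequality $\norm{C_\varphi - C_\psi}_e^2 \geq 1/\abs{\varphi'(\zeta)} + 1/\abs{\psi'(\zeta)}$, with the second summand interpreted as $0$ when $\psi$ has no finite angular derivative at $\zeta$.
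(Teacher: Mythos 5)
Your argument is correct, but it takes a genuinely different route from the paper. Theorem~\ref{thm:Mac} is quoted here from MacCluer's paper \cite{Mc} without proof, and the closest argument in the text is the proof of Theorem~\ref{thm:MacExt} via Lemma~\ref{le:Kernel}, which deliberately avoids adjoints: it estimates $\int\abs{C_\varphi f_w}^2$ over shrinking arcs on the circle of radius $1-\delta r$ centred near the angle $\kappa r a$, with $w=(1-r)e^{i\kappa r}$ approaching the boundary point almost tangentially ($\kappa\to\infty$), and shows that the contribution of $\psi$ there is negligible. Your proof is essentially MacCluer's original adjoint computation: $C_\varphi^* k_w=k_{\varphi(w)}$ reduces everything to $\alpha_w+\beta_w-2\R\gamma_w$, and the only delicate case, $\varphi(\zeta)=\psi(\zeta)$ with $a=\abs{\varphi'(\zeta)}\neq b=\abs{\psi'(\zeta)}$, is handled by the same nearly tangential idea, $w=\zeta(1-\epsil e^{i\theta})$ with $\theta\to\pi/2^-$ and a diagonalisation in $(\theta,\epsil)$; your ray limits $\alpha_w\to 1/a$, $\beta_w\to 1/b$ and the formula for $\lim_{\epsil\to 0}\R\gamma_w$ check out, and that limit indeed tends to $0$ as $\theta\to\pi/2^-$ precisely because $a\neq b$, so the diagonal sequence is legitimate (each fixed ray lies in a Stolz angle, and $\epsil_n$ is chosen after $\theta_n$). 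What each approach buys: yours is shorter for a single boundary point and in fact delivers the sharper bound $\norm{C_\varphi-C_\psi}_e^2\geq 1/\abs{\varphi'(\zeta)}+1/\abs{\psi'(\zeta)}$ (second term read as $0$ when $\psi$ lacks a finite angular derivative), whereas the paper's non-adjoint, localized method is designed to treat many atoms of $\tau_{\varphi,\alpha}$ simultaneously and add up their contributions, which is exactly what yields the extension $\norm{C_\varphi-C_\psi}_e^2\geq\tau_{\varphi,\alpha}(Z)$ of Theorem~\ref{thm:MacExt} and is not produced by a single reproducing-kernel sequence. Two small remarks: your parenthetical claim that the radial limit $(a-b)^2/(ab(a+b))$ is always strictly smaller than $1/a$ holds only when $a<3b$ (this is pure motivation, so nothing breaks); and for the theorem as stated you only need $\alpha_w\to 1/a$, $\R\gamma_w\to 0$ and $\beta_w\geq 0$ along your diagonal sequence, so the bookkeeping in the third case is even more forgiving than you indicate.
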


The relationship between angular derivatives and the atoms
of the Aleksandrov--Clark measures (see Sec.~\ref{sec:Angular})
allows us to restate Theorem~\ref{thm:Mac} as follows:

\begin{itemize}
\item
\emph{Assume that $\tau_{\varphi,\alpha}(\{\zeta\}) > 0$ for some
$\alpha \in \T$.  Then, unless
$\tau_{\psi,\alpha}(\{\zeta\}) = \tau_{\varphi,\alpha}(\{\zeta\})$,
one has
$\norm{C_\varphi-C_\psi}_e^2 \geq \tau_{\varphi,\alpha}(\{\zeta\})$.}
\end{itemize}

Theorem~\ref{thm:Mac} implies that, for each $\zeta \in \T$ and
$d \neq 0$, the set of all $C_\varphi$ with $\varphi'(\zeta) = d$
is both open and closed in $\Comp(\mathcal{H}^2)$ (even in the
topology induced by the essential norm).  Hence a necessary
condition for two composition operators to lie in the same component
(or essential component) of $\Comp(\mathcal{H}^2)$ is that the angular
derivatives of their symbols coincide. In particular, it follows that
if $C_\varphi$ belongs to $\Comp_K(\mathcal{H}^2)$, the component
containing all compact composition operators, then
$\varphi$ has no finite angular derivative at any point of $\T$---
or, equivalently, the Aleksandrov--Clark measure
$\tau_{\varphi,\alpha}$ has no atoms for any $\alpha \in \T$.

\begin{remark} \label{re:Bergman}
MacCluer's work was actually carried out in a general context of
weighted Dirichlet (or Bergman) spaces $\mathcal{D}_\beta$,
$\beta \geq 1$, which includes as special cases the Hardy space
$\mathcal{H}^2$ ($\beta = 1$) as well as the standard Bergman
space $\mathcal{A}^2$ ($\beta = 2$).  For $\beta > 1$ it is known
that the non-existence of finite angular derivatives
is both necessary and sufficient for the compactness of a composition
operator on $\mathcal{D}_\beta$ (see \cite{McSh} or \cite{CMc}).  So,
in these spaces, MacCluer's theorem implies
(e.g.\ by the argument at the beginning of
the preceding paragraph) that the compacts indeed
form a connected component of $\Comp(\mathcal{D}_\beta)$.

In another direction, Kriete and Moorhouse~\cite{KrMo} have recently
obtained various interesting refinements of MacCluer's results.  In
particular, they establish a version of Theorem~\ref{thm:Mac} for
higher-order boundary data of the symbols.
\end{remark}

In this section we will provide a slight improvement of
Theorem~\ref{thm:Mac}.  Our lower bound will involve the whole discrete
part of the Aleksandrov--Clark measure at $\alpha$.  This result
yields some heuristics for our construction in the proof of our Main
Theorem in Section~\ref{sec:Main} (see Remark~\ref{re:Heur}).

\begin{theorem} \label{thm:MacExt}
Let $\varphi,\psi\colon \D \to \D$ be analytic maps and
$\alpha \in \T$.  Write
\[
    Z = \bigl\{ \zeta \in \T :
        0 < \tau_{\varphi,\alpha}(\{\zeta\}) \neq
            \tau_{\psi,\alpha}(\{\zeta\}) \bigr\}.
\]
Then
\[
    \norm{C_\varphi-C_\psi}_e^2 \geq
       \tau_{\varphi,\alpha}(Z).
\]
\end{theorem}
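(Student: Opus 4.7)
The plan is to establish the bound by producing a weakly null unit sequence $(f_n) \subset \mathcal{H}^2$ with $\limsup_n \|(C_\varphi - C_\psi) f_n\|^2 \geq \tau_{\varphi,\alpha}(Z)$, which immediately implies the desired essential-norm estimate. Since $\tau_{\varphi,\alpha}$ is a finite Borel measure, its atomic support (which contains $Z$) is countable, so by monotone convergence it suffices to prove the bound for every finite subset $\{\zeta_1,\dots,\zeta_N\} \subseteq Z$. Writing $a_j = \tau_{\varphi,\alpha}(\{\zeta_j\}) = 1/|\varphi'(\zeta_j)|$ and $b_j = \tau_{\psi,\alpha}(\{\zeta_j\})$, by the definition of $Z$ we have $a_j > 0$ and $a_j \neq b_j$ for every $j$.

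At each $\zeta_j$ the natural local test vectors are the normalised reproducing kernels $k_{w_j^{(n)}}(z) = (1-|w_j^{(n)}|^2)^{1/2}/(1-\overline{w_j^{(n)}}z)$, where $w_j^{(n)} \to \zeta_j$ within a Stolz region. These are weakly null, and because the $\zeta_j$ are distinct they are also asymptotically orthonormal: $\langle k_{w_i^{(n)}}, k_{w_j^{(n)}}\rangle \to \delta_{ij}$. The Julia--Carath\'eodory theorem together with the identity $C_\varphi^* k_w = ((1-|w|^2)/(1-|\varphi(w)|^2))^{1/2}\,k_{\varphi(w)}$ yields the asymptotic norms and cross inner products of the output vectors $(C_\varphi - C_\psi)^* k_{w_j^{(n)}}$. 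The key point is that $\varphi(w_j^{(n)}) \to \alpha$ for \emph{every} $j$, so the leading terms of all $N$ outputs concentrate at the same boundary point; forming $f_n = \sum_j c_j k_{w_j^{(n)}}$ with weights $c_j$ tuned to the limiting Gram structure should make the outputs combine coherently and give the sum $\sum_j a_j$.

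The principal technical obstacle is a ``Case B'' atom $\zeta_j$ where $\psi(\zeta_j) = \alpha$ too (i.e.\ $b_j > 0$): then the $\psi$-piece of $(C_\varphi - C_\psi)^* k_{w_j^{(n)}}$ survives in the limit and partially cancels the $\varphi$-piece, so a pure source-side analysis only delivers $(a_j - b_j)^2/(a_j + b_j)$ at $\zeta_j$, strictly less than the target $a_j$ whenever $0 < b_j < 3 a_j$. To close this gap I would supplement the source-side construction with the image-side test function $k_{s\alpha}$ for $s \nearrow 1$: a direct Poisson-integral computation using formula~\eqref{eq:IntOper} shows that $\|C_\varphi k_{s\alpha}\|^2$ senses \emph{every} atom of $\tau_{\varphi,\alpha}$ at $\alpha$ and tends in the limit to $\sum_{\zeta \in \varphi^{-1}(\alpha)} \tau_{\varphi,\alpha}(\{\zeta\})$. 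Blending $k_{s\alpha}$ with the family $\{k_{w_j^{(n)}}\}$ and carefully tracking the cross interactions through the Aleksandrov--Clark formalism of Section~\ref{sec:ACDef} is the delicate step needed to deliver the full bound $\tau_{\varphi,\alpha}(Z)$.
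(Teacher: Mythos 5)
Your outline reproduces the easy parts of the argument (reduction to a finite subset of $Z$, weakly null normalized kernels, the single-atom computation via Julia--Carath\'eodory), but it stalls exactly at the point that constitutes the whole difficulty of the theorem, and the fix you propose does not work. First, the claimed identity $\lim_{s\to 1}\norm{C_\varphi k_{s\alpha}}_2^2=\sum_{\zeta\in\varphi^{-1}(\alpha)}\tau_{\varphi,\alpha}(\{\zeta\})$ is false: for an inner function $\varphi$ with $\varphi(0)=0$ and no finite angular derivative at any point, $\norm{C_\varphi k_{s\alpha}}_2^2=\int_\T P_{s\alpha}\,dm=1$ for every $s$, while $\tau_{\varphi,\alpha}$ has no atoms; what this test function detects is the full singular mass $\norm{\tau_{\varphi,\alpha}^s}$, not the atomic part. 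Second, and more fundamentally, a single global test function concentrated at $\alpha$ cannot isolate the atoms belonging to $Z$: it necessarily also ``senses'' atoms (and continuous singular mass) at which $\tau_{\varphi,\alpha}$ and $\tau_{\psi,\alpha}$ coincide, where the contributions of $C_\varphi$ and $C_\psi$ must cancel (take $\psi=\varphi$ to see that any bound insensitive to this cancellation is wrong). Consequently a triangle-inequality comparison of $\norm{C_\varphi k_{s\alpha}}$ and $\norm{C_\psi k_{s\alpha}}$ gives nothing whenever $\tau_{\psi,\alpha}$ carries comparable singular mass, and the ``blending and careful tracking of cross interactions'' you defer is precisely the missing proof, not a routine verification. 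The multi-atom step is likewise only asserted: since all the adjoint outputs $(C_\varphi-C_\psi)^*k_{w_j^{(n)}}$ concentrate at the same point $\alpha$, what a combination $\sum_j c_jk_{w_j^{(n)}}$ yields is governed by a limiting Gram matrix that you have not computed, and it is not clear it reaches $\sum_j a_j$.

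For comparison, the paper resolves your ``Case B'' not on the adjoint side but by a frequency-detuning device: the test kernel approaches $\alpha$ along the almost-tangential curve $w_r=(1-r)e^{i\kappa r}$, and one integrates $\abs{(C_\varphi-C_\psi)f_{w_r}}^2$ over circles of radius $1-\delta r$ only on windows of angular width $\sim\lambda r$ centred at $\arg\zeta_k+\kappa r a_k$, a location that depends on $a_k=\tau_{\varphi,\alpha}(\{\zeta_k\})$. Lemma~\ref{le:Kernel} shows that, after $r\to 0$ and then $\kappa\to\infty$, $\lambda\to\infty$, $\delta\to 0$, each such window captures the full mass $a_k$ from $\varphi$, while the $\psi$-contribution in the same window tends to zero whenever $\tau_{\psi,\alpha}(\{\zeta_k\})\neq a_k$ --- even if it is strictly positive --- because the $\psi$-peak sits at angular distance $\approx\kappa r\abs{a_k-b_k}$, far outside the window for large $\kappa$. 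This is what eliminates the partial cancellation you worry about; moreover, the windows attached to distinct atoms are disjoint for small $r$, so the contributions simply add and no Gram-matrix tuning is needed. If you want to salvage an adjoint-side proof, you would need an analogous tangential approach (MacCluer's ``almost right angle'' curves) making $k_{\psi(w)}$ asymptotically orthogonal to $k_{\varphi(w)}$ when the angular derivatives differ; the radial/Stolz approach you chose cannot give more than $(a_j-b_j)^2/(a_j+b_j)$, as you yourself observe.
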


In the proof of Theorem~\ref{thm:MacExt} we will use as test
functions the normalized reproducing kernels
\[
    f_w(z) = \frac{\sqrt{1-\abs{w}^2}}{1-\overline{w}z}.
\]
They have
the property that $\norm{f_w}_2 = 1$ for all $w \in \D$ and
$f_w \to 0$ weakly as $\abs{w} \to 1$, whence
$\norm{C_\varphi-C_\psi}_e \geq \limsup_{\abs{w}\to 1}
\norm{(C_\varphi-C_\psi)f_w}_2$.  We will borrow MacCluer's idea of
letting $w$ approach $\alpha$ along a curve which makes almost right
angle with the radius to $\alpha$.   However, instead of considering
the adjoints of $C_\varphi$ and $C_\psi$ as in \cite{Mc} and
\cite{KrMo}, we will deal with the composition operators themselves.
The following lemma contains the estimates crucial for our argument.

\begin{lemma} \label{le:Kernel}
Let $\varphi\colon \D \to \D$ be analytic and fix $a > 0$.  For
$\delta, \kappa, \lambda, r > 0$, write
\[
    I(\delta,\kappa,\lambda, r)
       = \frac{1}{2\pi}
         \int_{\kappa ra-\lambda r}^{\kappa ra+\lambda r}
         \bigabs{ C_\varphi f_{(1-r)e^{i\kappa r}}
                  ((1-\delta r)e^{it}) }^2 \,dt.
\]
\begin{enumerate}
\item
If $\tau_{\varphi,1}(\{1\}) = a$, then
\[
    \lim_{r\to 0} I(\delta,\kappa,\lambda, r)
       = \frac{a \cdot c(\delta,\lambda)}{1+\delta/a},
\]
where $0 < c(\delta,\lambda) < 1$ and
$\lim_{\lambda\to\infty} c(\delta,\lambda) = 1$ for all
$\delta > 0$.
\item
If $\tau_{\varphi,1}(\{1\}) \neq a$, then
\[
    \lim_{r\to 0} I(\delta,\kappa,\lambda, r)
       = \epsil(\delta,\kappa,\lambda),
\]
where $\lim_{\kappa\to\infty} \epsil(\delta,\kappa,\lambda) = 0$ for
all $\delta,\lambda > 0$.
\end{enumerate}
\end{lemma}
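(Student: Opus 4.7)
The plan is to blow up the integration window by a factor of $r$, apply the Julia--Carath\'eodory expansion of $\varphi$ at $\zeta=1$ (available whenever $\tau_{\varphi,1}(\{1\})>0$ via the correspondence recalled in Section~\ref{sec:Angular}), and then compute the resulting limit integral explicitly. Substituting $t=\kappa ra+rs$ with $s\in\cinterv{-\lambda,\lambda}$, and writing $w=(1-r)e^{i\kappa r}$ and $z_r(s)=(1-\delta r)e^{i(\kappa a+s)r}$, one obtains
\begin{equation*}
I(\delta,\kappa,\lambda,r) = \frac{r}{2\pi}\int_{-\lambda}^{\lambda}\frac{1-\abs{w}^2}{\abs{1-\bar w\,\varphi(z_r(s))}^2}\,ds,
\end{equation*}
together with the elementary expansions $1-\abs{w}^2=2r+o(r)$, $\bar w=1-r(1+i\kappa)+o(r)$, and $z_r(s)-1=r(-\delta+i(\kappa a+s))+o(r)$, all uniform in $s$. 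Crucially, $\abs{1-z_r(s)}/(1-\abs{z_r(s)})$ is bounded by a constant depending only on $\delta,\kappa,\lambda$, so the curves $z_r(s)$ all lie inside a fixed Stolz angle at $1$.

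For Part (1), the hypothesis $\tau_{\varphi,1}(\{1\})=a$ forces $\varphi(1)=1$ and $\varphi'(1)=1/a$, and the non-tangential expansion $\varphi(z)=1+(1/a)(z-1)+o(z-1)$ applies uniformly over the Stolz angle above. A short computation yields
\begin{equation*}
1-\bar w\,\varphi(z_r(s)) = r\bigl[(1+\delta/a)-is/a\bigr]+o(r),
\end{equation*}
where the $\kappa$-dependent imaginary parts cancel. Dominated convergence then gives
\begin{equation*}
\lim_{r\to 0}I(\delta,\kappa,\lambda,r) = \frac{1}{\pi}\int_{-\lambda}^{\lambda}\frac{ds}{(1+\delta/a)^2+(s/a)^2} = \frac{a}{1+\delta/a}\cdot\frac{2}{\pi}\arctan\frac{\lambda}{a+\delta},
\end{equation*}
matching the stated identity with $c(\delta,\lambda)=(2/\pi)\arctan(\lambda/(a+\delta))\in(0,1)$ tending to $1$ as $\lambda\to\infty$.

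For Part (2) write $b:=\tau_{\varphi,1}(\{1\})$; by hypothesis $b\neq a$. If $b>0$, the same expansion holds with $a$ replaced by $b$, but the cancellation in Part (1) fails:
\begin{equation*}
1-\bar w\,\varphi(z_r(s)) = r\bigl[(1+\delta/b)+i\bigl(\kappa(1-a/b)-s/b\bigr)\bigr]+o(r).
\end{equation*}
Since $a\neq b$, the shift $\kappa(1-a/b)\to\pm\infty$ as $\kappa\to\infty$ pushes the peak of the limiting integrand far outside $\cinterv{-\lambda,\lambda}$; an explicit bound of the integral then delivers $\epsil(\delta,\kappa,\lambda)\to 0$. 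If $b=0$, either $\varphi$ has an angular limit at $1$ different from $1$ (whence $\abs{1-\bar w\,\varphi(z_r(s))}$ stays bounded away from $0$ and $I=O(r)$), or, by Julia's lemma, $\liminf_{z\to 1}(1-\abs{\varphi(z)})/(1-\abs z)=\infty$, which forces $\abs{1-\bar w\,\varphi(z_r(s))}/r\to\infty$ uniformly in $s$ and again $I\to 0$. In either subcase $\epsil\equiv 0$.

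The principal technical point is the uniformity of the Julia--Carath\'eodory little-$o$ over $s\in\cinterv{-\lambda,\lambda}$; this follows from the observation that all the curves $z_r(s)$ sit in a common Stolz angle at $1$ of aperture determined by $\delta,\kappa,\lambda$. Dominated convergence then legitimately passes the limit through the integral, the lower bound $\abs{1-\bar w\,\varphi(z_r(s))}\geq 1-\abs w=r$ providing the needed integrable majorant.
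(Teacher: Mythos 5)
Your proof is correct and follows essentially the same route as the paper: the same angular-derivative expansion $1-\varphi(z)=b^{-1}(1-z)+o(\abs{1-z})$ along the curves $z_r$, the same cancellation of the $\kappa$-terms when $b=a$ and the $\kappa\to\infty$ escape of the shifted Lorentzian when $b\neq a$, and the same Julia--Carath\'eodory argument when $\tau_{\varphi,1}(\{1\})=0$. The only differences are cosmetic: your rescaling $t=\kappa ra+rs$ versus the paper's $u=t/ra-\kappa$, the explicit evaluation $c(\delta,\lambda)=(2/\pi)\arctan(\lambda/(a+\delta))$, and dominated convergence with the majorant $\abs{1-\overline{w}\varphi}\geq r$ in place of the paper's uniform-in-$t$ error terms.
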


\begin{proof}
Let us fix $\delta,\kappa,\lambda > 0$, and write
$w_r = (1-r)e^{i\kappa r}$ and
$z_r(t) = (1-\delta r)e^{it}$.  Then
\begin{equation} \label{eq:Iintegral}
    I(\delta,\kappa,\lambda,r)
    = \frac{2r-r^2}{2\pi}
      \int_{\kappa ra-\lambda r}^{\kappa ra+\lambda r}
      \frac{dt}{\abs{1-\overline{w_r} \varphi(z_r(t))}^2}.
\end{equation}

We first consider the case when
$\tau_{\varphi,1}(\{1\}) = b$ for some $b > 0$.  That is,
$\varphi(1) = 1$ and $\varphi$ has a finite angular derivative
equal to $1/b$ at $1$. Note that the points $z_r(t)$ involved in
\eqref{eq:Iintegral} for $0 < r < 1$ all lie in a non-tangential
approach region for the point $1$ (whose opening angle depends
on $\delta$, $\kappa$, $a$,  and $\lambda$).  Therefore, for these
$z_r(t)$ we have
\[
    1 - \varphi(z_r(t)) = b^{-1} (1-z_r(t)) + r \epsil_r(t),
\]
uniformly in $t$.  Here and elsewhere in this proof we use
$\epsil_r$ (with or without additional parameters) to denote a
quantity which tends to zero as $r \to 0$.  With this notation,
we may also write $1 - \overline{w_r} = r + i\kappa r + r\epsil_r$
and $1 - z_r(t) = \delta r - it + r\epsil_r(t)$.  Consequently,
\[ \begin{split}
    1 - \overline{w_r}\varphi(z_r(t))
    &= (1 - \overline{w_r}) + \{1 - \varphi(z_r(t))\} + r\epsil_r(t) \\
    &= r(1+\delta/b) + i(\kappa r-t/b) + r\epsil_r(t).
\end{split} \]
We substitute this expression into the integrand in
\eqref{eq:Iintegral} and perform the change of variables
$u = t/ra-\kappa$ to get
\[ \begin{split}
    I(\delta,\kappa,\lambda,r)
    &= \frac{(2r-r^2)ra}{2\pi} \int_{-\lambda/a}^{+\lambda/a}
       \frac{du}{ \abs{r(1+\delta/b) +
          i (\kappa r - \kappa ra/b - rau/b) + r\epsil_r(u)}^2 } \\
    &= \frac{(2-r)a}{2\pi} \int_{-\lambda/a}^{+\lambda/a}
       \frac{du}{ \abs{(1+\delta/b) +
          i ((1-a/b)\kappa - au/b) + \epsil_r(u)}^2 }.
\end{split} \]
Hence
\begin{equation} \label{eq:Ilimit}
    \lim_{r\to 0} I(\delta,\kappa,\lambda,r)
    = \frac{a}{\pi} \int_{-\lambda/a}^{+\lambda/a}
      \frac{du}{ (1+\delta/b)^2 + ((1-a/b)\kappa-au/b)^2 }.
\end{equation}
If $b = a$, this limit equals
\[
    \frac{a}{\pi} \int_{-\lambda/a}^{+\lambda/a}
    \frac{du}{ (1+\delta/a)^2 + u^2 },
\]
which is of the desired form $ac(\delta,\lambda)/(1+\delta/a)$.
On the other hand, if $b \neq a$, then the integrand in
\eqref{eq:Ilimit} tends to zero as $\kappa \to \infty$, uniformly
in $u$.  So, in this case \eqref{eq:Ilimit} goes to zero as
$\kappa \to \infty$.

Finally assume that $\tau_{\varphi,1}(\{1\}) = 0$, so
$\varphi$ has no finite angular derivative at $1$ or
$\varphi(1) \neq 1$.  By the Julia--Carath\'{e}odory theorem,
we now have $(1-\varphi(z))/(1-z) \to \infty$ as $z \to 1$
non-tangentially.
By considerations similar to those in the first part of the proof,
this implies that
$\{1-\overline{w_r}\varphi(z_r(t))\}/r \to \infty$ as $r \to 0$,
uniformly in $t$, and hence
$I(\delta,\kappa,\lambda,r) \to 0$ as $r \to 0$.  We leave the
details to the reader.
\end{proof}

\begin{proof}[Proof of Theorem~\ref{thm:MacExt}]
Without loss of generality, we may take $\alpha = 1$.  We first treat
the case of a single
mass point and then indicate the general argument.  Let us assume
that $\tau_{\varphi,1}(\{1\}) = a \neq \tau_{\psi,1}(\{1\})$ for some
$a > 0$.  Then, for $\delta,\kappa,\lambda > 0$ and small enough
$r > 0$, we have
\[ \begin{split}
    \bignorm{ (C_\varphi-C_\psi) f_{(1-r)e^{i\kappa r}} }_2
       &\geq \biggl( \frac{1}{2\pi}
             \int_{\kappa ra-\lambda r}^{\kappa ra+\lambda r}
             \bigabs{ (C_\varphi-C_\psi) f_{(1-r)e^{i\kappa r}}
                   ((1-\delta r) e^{it}) }^2
             \,dt \biggr)^{1/2}  \\
       &\geq I_\varphi(\delta,\kappa,\lambda, r)^{1/2} -
             I_\psi(\delta,\kappa,\lambda, r)^{1/2},
\end{split} \]
where $I_\varphi$ and $I_\psi$ refer to the integrals of
Lemma~\ref{le:Kernel} corresponding to $\varphi$ and $\psi$, respectively.
Passing to the limit as $r \to 0$, we then get the following type of
lower bound for the essential norm of $C_\varphi-C_\psi$:
\[
    \norm{C_\varphi-C_\psi}_e
       \geq \biggl(
            \frac{a\cdot c(\delta,\lambda)}{1+\delta/a}\biggr)^{1/2}
            - \epsil(\delta,\kappa,\lambda)^{1/2}.
\]
Letting $\kappa \to \infty$, $\lambda \to \infty$ and $\delta \to 0$
now yields $\norm{C_\varphi-C_\psi}_e \geq a^{1/2}$ as desired.

To prove the theorem in full (assuming still $\alpha = 1$), we observe
that the above reasoning is local in the sense that the interval
$\cinterv{\kappa ra-\lambda r,\;\kappa ra+\lambda r}$ shrinks to
$0$ as $r \to 0$.  Let $Z_0 = \{\zeta_1,\ldots,\zeta_n\}$ be any
finite subset of the (possibly infinite) set $Z$, where
$\zeta_k \neq \zeta_l$ for $k \neq l$.  Write $t_k = \arg\zeta_k$ and
$a_k = \tau_{\varphi,1}(\{\zeta_k\})$.  We proceed as above, just
integrating over the union of the intervals
$\cinterv{t_k+\kappa ra_k-\lambda r,\;t_k+\kappa ra_k+\lambda r}$,
$k = 1,\ldots,n$.  Since these are disjoint for small $r$, we get,
after passing to the appropriate limits as above,
\[
    \norm{C_\varphi-C_\psi}_e
       \geq \biggl(\sum_{k=1}^n
       \tau_{\varphi,1}(\{\zeta_k\}) \biggr)^{1/2}
       = \tau_{\varphi,1}(Z_0)^{1/2}.
\]
Finally, if $Z$ is infinite, we take the supremum over all finite
subsets $Z_0 \subset Z$ to complete the proof of the theorem.
\end{proof}

\section{Proof of Main Theorem:\ non-compact composition operators
in the component of compacts}
\label{sec:Main}

In this section we will establish our Main Theorem, giving a positive
answer to the question~(B) stated in Section~\ref{sec:Intro}.
We will actually find a continuous path that connects compact
composition operators to a non-compact one.  Moreover, the same
construction turns out to work for a variety of spaces
in addition to $\mathcal{H}^2$.

\begin{mainteo}
For $0 \leq t \leq 1$ there are analytic maps
$\varphi_t\colon \D \to \D$ such that $C_{\varphi_0}$ is compact and
$C_{\varphi_1}$ is non-compact on $X$, and $t \mapsto C_{\varphi_t}$
is continuous from $[0,1]$ into $\Comp(X)$, where $X$ is any of the
spaces $\mathcal{M}$, $L^p$ or $\mathcal{H}^p$ with
$1 \leq p < \infty$.
\end{mainteo}

We begin with some preliminary observations and lemmas.  First of all,
it is enough to deal with the case $X = \mathcal{M}$.  Indeed, as we
pointed out in Section~\ref{sec:CompOp}, the compactness of
composition operators is equivalent in any two of the spaces
mentioned.  Furthermore, we may apply
interpolation between $L^1$ (a subspace of $\mathcal{M}$) and
$L^\infty$ to conclude that for any $1 \leq p < \infty$ and
$s,t \in \cinterv{0,1}$,
\[ \begin{split}
    &\norm{C_{\varphi_s}-C_{\varphi_t}\colon L^p\to L^p}  \\
    &\qquad
    \leq \norm{C_{\varphi_s}-C_{\varphi_t}\colon L^1\to L^1}^{1/p} \,
       \norm{C_{\varphi_s}-C_{\varphi_t}\colon
             L^\infty\to L^\infty}^{1-1/p}
    \\
    &\qquad
    \leq 2^{1-1/p} \, \norm{C_{\varphi_s}-C_{\varphi_t}\colon
       \mathcal{M}\to\mathcal{M}}^{1/p}.
\end{split} \]
(See e.g.\ \cite[Sec.~4.1]{BeSh} for the classical Riesz--Thorin
interpolation theorem.)

Throughout the proof we will utilize Sarason's way of viewing
composition operators as acting on the unit circle (cf.\
Sec.~\ref{sec:CompOp}).  If $\varphi$ is an analytic self-map of
$\D$ and $E \subset \T$ is a Borel set, we let $\chi_E C_\varphi$
denote the restriction of $C_\varphi$ to $E$.  More precisely, if
$\mu \in \mathcal{M}$ and $C_\varphi\mu = \nu$, then
$\chi_E C_\varphi\mu$ refers to the Borel measure
$B \mapsto \nu(E \cap B)$ on $\T$.  For functions $f \in L^1$, this
simply means that $\chi_E C_\varphi f(\zeta) =
\chi_E(\zeta) f(\varphi(\zeta))$ for $m$-a.e.\ $\zeta \in \T$.
In this context, equation \eqref{eq:IntOper} easily yields that
\begin{equation} \label{eq:MNorm}
    \norm{\chi_E C_\varphi\colon \mathcal{M} \to \mathcal{M}} =
    \sup \{\tau_{\varphi,\alpha}(E) : \alpha \in \T \}.
\end{equation}

We will also need a tool to estimate the size of the difference of
two composition operators in terms of the boundary values
of their symbols.  We use $\rho$ to denote the hyperbolic distance
in $\D$; it is the conformally invariant metric induced by the
arc length element $2\abs{dz}/(1-\abs{z}^2)$ (see e.g.\
\cite[Sec.~I.1]{Ga}).  When working with
hyperbolic distances, it is often convenient to shift to the
right half-plane $\Hplane = \{z': \R z' > 0\}$, where the hyperbolic
metric $\rho_\Hplane$ is induced by the arc length element
$\abs{dz'}/\R z'$.
For any $\alpha \in \T$, this is accomplished through the M\"obius
transformation $z' = (\alpha+z)/(\alpha-z)$, which takes $\D$ onto
$\Hplane$ isometrically relative to $\rho$ and $\rho_\Hplane$.
This transformation we have already encountered in the definition
of Aleksandrov--Clark measures.

\begin{lemma} \label{le:MDiff}
Let $\varphi,\psi\colon \D \to \D$ be analytic, and let $E \subset \T$
be a Borel set such that $\tau_{\varphi,\alpha}(\partial E) =
\tau_{\psi,\alpha}(\partial E) = 0$ for all $\alpha \in \T$.
Also let $0 < \epsil < 1$.
Suppose that for $m$-a.e.\ $\zeta \in E$ the following holds:\
if one of $\varphi(\zeta)$ and $\psi(\zeta)$ is unimodular, then
$\varphi(\zeta) = \psi(\zeta)$, and otherwise
$\rho(\varphi(\zeta),\psi(\zeta)) \leq \epsil$.
Then
\[
    \norm{\chi_E (C_\varphi-C_\psi)\colon
       \mathcal{M} \to \mathcal{M}}
       \leq C \epsil / (1-\abs{\varphi(0)}),
\]
where $C > 0$ is a universal constant.
\end{lemma}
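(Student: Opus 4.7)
The plan is to reduce the operator-norm estimate to a uniform total-variation bound on Aleksandrov--Clark measures, and then establish that bound by transporting the problem to the half-plane $\Hplane$ through the M\"obius isometry $\Lambda_\alpha(w)=(\alpha+w)/(\alpha-w)$.

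First, I would specialize \eqref{eq:IntOper} with $f=\chi_{E\cap B}$ for Borel $B\subset\T$ to obtain the disintegration
\[
\chi_E(C_\varphi-C_\psi)\mu
= \int_\T \chi_E(\tau_{\varphi,\alpha}-\tau_{\psi,\alpha})\,d\mu(\alpha),
\]
interpreted in $\mathcal{M}$ in the weak-$\ast$ sense. Taking total variations and optimising over $\norm{\mu}\leq 1$ yields
\[
\bignorm{\chi_E(C_\varphi-C_\psi)\colon\mathcal{M}\to\mathcal{M}}
\leq \sup_{\alpha\in\T}\bignorm{\chi_E\tau_{\varphi,\alpha}-\chi_E\tau_{\psi,\alpha}},
\]
so the task reduces to bounding the total variation of $\chi_E(\tau_{\varphi,\alpha}-\tau_{\psi,\alpha})$ uniformly in $\alpha\in\T$.

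Now fix $\alpha$ and set $\Phi=\Lambda_\alpha\circ\varphi$, $\Psi=\Lambda_\alpha\circ\psi$, both analytic $\D\to\Hplane$, so that $\R\Phi$ and $\R\Psi$ are the Poisson integrals of $\tau_{\varphi,\alpha}$ and $\tau_{\psi,\alpha}$ and their non-tangential boundary values are the densities $\tau_{\varphi,\alpha}^a$ and $\tau_{\psi,\alpha}^a$. The key elementary input is that whenever $w_1,w_2\in\Hplane$ satisfy $\rho_\Hplane(w_1,w_2)\leq\epsil<1$ one has $\abs{w_1-w_2}\leq C\epsil\min(\R w_1,\R w_2)$. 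Since $\Lambda_\alpha$ is a hyperbolic isometry, the boundary hypothesis gives, at $m$-a.e.\ $\zeta\in E$, either $\Phi(\zeta)=\Psi(\zeta)$ (the unimodular case forcing $\varphi(\zeta)=\psi(\zeta)$) or $\abs{\R\Phi(\zeta)-\R\Psi(\zeta)}\leq C\epsil\,\R\Phi(\zeta)$. Integrating over $E$ and using the mean-value identity
\[
\int_\T \R\Phi\,dm = \R\Phi(0)
= \frac{1-\abs{\varphi(0)}^2}{\abs{\alpha-\varphi(0)}^2}
\leq \frac{2}{1-\abs{\varphi(0)}}
\]
yields the desired bound $C\epsil/(1-\abs{\varphi(0)})$ for the absolutely continuous contribution; in particular only $\varphi(0)$ appears because we estimated $|\R\Phi-\R\Psi|$ by a multiple of $\R\Phi$ alone.

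The main technical obstacle is to absorb the singular parts of $\tau_{\varphi,\alpha}$ and $\tau_{\psi,\alpha}$ on $E$ into the same estimate, because the $m$-a.e.\ boundary hypothesis imposes no direct constraint on the Lebesgue-null carriers of these singular measures. This is precisely where the assumption $\tau_{\varphi,\alpha}(\boundary E)=\tau_{\psi,\alpha}(\boundary E)=0$ enters: it allows $\chi_E$ to be approximated by continuous functions $\phi_n$ that converge in $L^1(\tau_{\varphi,\alpha}+\tau_{\psi,\alpha})$. For such $\phi_n$ the integrals against $\tau_{\varphi,\alpha}$ and $\tau_{\psi,\alpha}$ can be computed through the Poisson integrals of $\phi_n$ composed with $\Phi$ and $\Psi$; propagating the boundary inequality $\abs{\R\Phi-\R\Psi}\leq C\epsil\,\R\Phi$ to the interior of $\D$ by a Poisson integral argument and letting $n\to\infty$ then brings the singular contribution under the same bound. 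Passing to the supremum in $\alpha$ then completes the proof.
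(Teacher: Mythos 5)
Your reduction to a uniform bound on $\abs{\tau_{\varphi,\alpha}-\tau_{\psi,\alpha}}(E)$ and your estimate for the absolutely continuous parts are correct and close in spirit to the paper's argument. The genuine gap is exactly at the point you flag as the main obstacle, and your proposed fix does not work. The inequality $\abs{\R\Phi(\zeta)-\R\Psi(\zeta)}\leq C\epsil\,\R\Phi(\zeta)$ is an inequality between \emph{a.e.\ boundary values}, i.e.\ between the densities $\tau^a_{\varphi,\alpha}$ and $\tau^a_{\psi,\alpha}$, and such an inequality cannot be ``propagated to the interior by a Poisson integral argument'': the interior values of $\R\Phi-\R\Psi$ are the Poisson integral of the full signed measure $\tau_{\varphi,\alpha}-\tau_{\psi,\alpha}$, whose singular part is completely invisible to a.e.\ boundary data (a positive harmonic function with boundary values $\equiv 1$ a.e.\ may carry an arbitrarily large singular part). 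Approximating $\chi_E$ by continuous functions in $L^1(\tau_{\varphi,\alpha}+\tau_{\psi,\alpha})$ is possible for \emph{any} Borel set by regularity, so it cannot be the mechanism that controls the singular contribution either; the hypothesis $\tau_{\varphi,\alpha}(\boundary E)=\tau_{\psi,\alpha}(\boundary E)=0$ is needed for a different purpose (see below). A smaller slip: $\int_\T \R\Phi\,dm=\R\Phi(0)$ holds only when $\tau_{\varphi,\alpha}$ is absolutely continuous; in general one only has $\leq$, which happens to be the direction you need.

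The missing idea is a dilation argument. For $0<r<1$ apply your pointwise estimate to $r\varphi$ and $r\psi$: by the Schwarz--Pick lemma $\rho(r\varphi(\zeta),r\psi(\zeta))\leq\rho(\varphi(\zeta),\psi(\zeta))\leq\epsil$ for a.e.\ $\zeta\in E$, and at the exceptional points where $\varphi(\zeta)=\psi(\zeta)$ is unimodular one trivially has $r\varphi(\zeta)=r\psi(\zeta)$. Hence
\[
\int_E \biggabs{\frac{1-\abs{r\varphi}^2}{\abs{\alpha-r\varphi}^2}
-\frac{1-\abs{r\psi}^2}{\abs{\alpha-r\psi}^2}}\,dm
\;\leq\; C\epsil\int_\T \frac{1-\abs{r\varphi}^2}{\abs{\alpha-r\varphi}^2}\,dm
\;=\; C\epsil\,\frac{1-\abs{r\varphi(0)}^2}{\abs{\alpha-r\varphi(0)}^2},
\]
the last step by harmonicity of $z\mapsto(1-\abs{r\varphi(z)}^2)/\abs{\alpha-r\varphi(z)}^2$. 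These absolutely continuous measures are computed from the a.e.\ boundary values of $\varphi$ and $\psi$ alone, yet they converge weak* to $\tau_{\varphi,\alpha}$ and $\tau_{\psi,\alpha}$ as $r\to1$, so the singular parts are recovered in the limit; the uniform-in-$r$ bound on $E$ survives the limit precisely because $\abs{\tau_{\varphi,\alpha}-\tau_{\psi,\alpha}}(\boundary E)=0$ (this is where the $\boundary E$ hypothesis is really used, via lower semicontinuity of the variation on the interior of $E$). This gives $\abs{\tau_{\varphi,\alpha}-\tau_{\psi,\alpha}}(E)\leq 2C\epsil/(1-\abs{\varphi(0)})$ uniformly in $\alpha$, which combined with your first reduction completes the proof.
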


\begin{proof}
We first note that the Poisson kernel functions $P_z$ satisfy the
following estimate:\ for all $z,w \in \D$ with $\rho(z,w) \leq 1$ and
$\alpha \in \T$,
\begin{equation} \label{eq:PEst}
    \abs{P_z(\alpha)-P_w(\alpha)} \leq C\rho(z,w)\, P_z(\alpha),
\end{equation}
where $C > 0$ is a universal constant.  In fact, one may use
the transformation $z' = (\alpha+z)/(\alpha-z)$ to pass to the right
half-plane where \eqref{eq:PEst} becomes
\[
    \abs{\R(z'-w')} \leq C \rho_\Hplane(z',w') \,\R z',
\]
which is easy to verify by geometric reasoning.

Now fix $\alpha \in \T$ and $0 < r < 1$.  Since
$\rho(r\varphi(\zeta),r\psi(\zeta)) \leq \epsil$ for
$m$-a.e.\ $\zeta \in E$, we get by \eqref{eq:PEst} that
\[
    \int_E \biggabs{
       \frac{1-\abs{r\varphi}^2}{\abs{\alpha-r\varphi}^2} -
       \frac{1-\abs{r\psi}^2}{\abs{\alpha-r\psi}^2} } \,dm
    \leq C\epsil \int_E
       \frac{1-\abs{r\varphi}^2}{\abs{\alpha-r\varphi}^2} \,dm
    \leq C\epsil
       \frac{1-\abs{r\varphi(0)}^2}{\abs{\alpha-r\varphi(0)}^2}.
\]
The last inequality was obtained by extending the integral over
the whole circle $\T$ and using the harmonicity of the integrand.
The definition of the Aleksandrov--Clark measures implies that
the absolutely continuous measure
$(1-\abs{r\varphi}^2)/\abs{\alpha-r\varphi}^2\,dm$ converges
to $\tau_{\varphi,\alpha}$ weak* as $r \to 1$.  Similarly
$(1-\abs{r\psi}^2)/\abs{\alpha-r\psi}^2\,dm$ converges to
$\tau_{\psi,\alpha}$.  Therefore, the preceding chain of
inequalities yields, as $r \to 1$,
\[
    \abs{\tau_{\varphi,\alpha}-\tau_{\psi,\alpha}}(E)
       \leq C\epsil
       \frac{1-\abs{\varphi(0)}^2}{\abs{\alpha-\varphi(0)}^2}.
\]
(Here we needed the assumption that $\tau_{\varphi,\alpha}$ and
$\tau_{\psi,\alpha}$ both assign measure zero to the boundary of $E$.)
Hence
\[ \begin{split}
    \norm{\chi_E (C_\varphi-C_\psi)\colon
       \mathcal{M} \to \mathcal{M}}
       &=    \sup \{\abs{\tau_{\varphi,\alpha}-\tau_{\psi,\alpha}}(E) :
             \alpha \in \T \}  \\
       &\leq \frac{2C \epsil}{1-\abs{\varphi(0)}},
\end{split} \]
and the proof is complete.
\end{proof}

We are now in a position to define the maps $\varphi_t$.
Recall from Section~\ref{sec:CompOp} that a composition
operator $C_\varphi$ is non-compact on any of the spaces
mentioned in Main Theorem if and only if at least one of the
Aleksandrov--Clark measures $\tau_{\varphi,\alpha}$ fails to
be absolutely continuous.  On the other
hand, if $C_\varphi$ is required to belong to the component of compact
composition operators, MacCluer's theorem implies that none of
$\tau_{\varphi,\alpha}$ may have atoms.  That is why we have to
consider Aleksandrov--Clark measures with continuous singularity.

Let $\lambda$ be any nontrivial, positive and finite continuously
singular Borel measure on the unit circle $\T$.  For
$0 \leq t \leq 1$, let
\begin{equation} \label{eq:tau1def}
    \tau_{t,1} = m + \chi_{I(0,t)} \lambda,
\end{equation}
where $I(0,t) \subset \T$ is the closed arc connecting the point
$1$ to $e^{2\pi it}$ in the positive direction and, as before,
$m$ denotes the normalized Lebesgue measure.  We consider
the Herglotz integral of $\tau_{t,1}$,
\[
    H\tau_{t,1}(z) =
       \int_\T \frac{\zeta+z}{\zeta-z} \,d\tau_{t,1}(\zeta),
\]
and define the map $\varphi_t$ by
\begin{equation} \label{eq:phi_t}
    \frac{1+\varphi_t}{1-\varphi_t} = H\tau_{t,1},
    \qquad\text{that is,}\qquad
    \varphi_t = \frac{H\tau_{t,1}-1}{H\tau_{t,1}+1}.
\end{equation}
Since the real part of $H\tau_{t,1}$ is the Poisson integral of
$\tau_{t,1}$, we see that $\tau_{t,1}$ becomes the
Aleksandrov--Clark measure of $\varphi_t$ at $1$.  Moreover,
since this Poisson integral is $\geq 1$ everywhere on $\D$,
it follows that $\varphi_t$ either takes $\D$ into the open
disc $\{w : \abs{w-\tfrac{1}{2}} < \tfrac{1}{2}\}$ or is
constant $0$ (for small $t$).  In general, we let
$\tau_{t,\alpha}$ denote the Aleksandrov--Clark measure of
$\varphi_t$ at $\alpha \in \T$.

The compactness statements of Main Theorem are now immediate.
Since $\tau_{1,1} = m + \lambda$ is not absolutely continuous,
the operator $C_{\varphi_1}$ is non-compact.  On the other hand,
$\varphi_0 \equiv 0$, so $C_{\varphi_0}$ is clearly compact.

The hard part of the proof consists of showing that the map
$t \mapsto C_{\varphi_t}$ is indeed continuous.  This will be based
on the following two lemmas.

\begin{lemma} \label{le:tauMeasure}
Let $\epsil > 0$.  There exists $\delta > 0$ such that if
$I \subset \T$ is an arc with $m(I) \leq \delta$, then the
Aleksandrov--Clark measures of the maps $\varphi_t$ satisfy
$\tau_{t,\alpha}(I) \leq \epsil$ for all $t \in [0,1]$ and
$\alpha \in \T$.  In particular, none of $\tau_{t,\alpha}$
have atoms.
\end{lemma}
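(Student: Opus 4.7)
The plan is a weak-$*$ compactness argument, resting on two facts: (i)~each $\tau_{t,\alpha}$ is a continuous measure (no atoms); (ii)~the map $(t,\alpha)\mapsto\tau_{t,\alpha}$ is continuous from $[0,1]\times\T$ into $\mathcal{M}$ equipped with the weak-$*$ topology. A standard topological principle then produces the claimed uniform arc bound.

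For (i), two cases. When $\alpha=1$, by \eqref{eq:tau1def} $\tau_{t,1}=m+\chi_{I(0,t)}\lambda$ is the sum of two continuous measures (using that $\lambda$ is), so has no atoms. When $\alpha\ne 1$, the crucial geometric observation is that $\R H\tau_{t,1}(z)\ge 1$ on $\D$ (since $\tau_{t,1}\ge m$), so by \eqref{eq:phi_t} the image $\varphi_t(\D)$ lies inside the closed disc $\{|w-\tfrac12|\le\tfrac12\}$, which touches $\T$ only at the point $1$. Hence every non-tangential boundary limit of $\varphi_t$ lies in this disc, and $\{\zeta\in\T:\varphi_t(\zeta)=\alpha\}$ is empty for $\alpha\ne 1$. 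Since $\tau_{t,\alpha}^s$ is carried by this set, $\tau_{t,\alpha}^s=0$ and $\tau_{t,\alpha}$ is purely absolutely continuous.

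For (ii), I use that weak-$*$ convergence of a uniformly bounded family of positive measures is equivalent to pointwise convergence of their Poisson integrals on $\D$. Since $P[\tau_{t,\alpha}](z)=(1-|\varphi_t(z)|^2)/|\alpha-\varphi_t(z)|^2$, it suffices to show that $(t,\alpha,z)\mapsto\varphi_t(z)$ is continuous on $[0,1]\times\T\times\D$. By \eqref{eq:phi_t} this reduces to continuity of $t\mapsto H(\chi_{I(0,t)}\lambda)(z)$, a consequence of the convergence $\|\chi_{I(0,t)}\lambda-\chi_{I(0,t')}\lambda\|_{\mathcal{M}}=\lambda(I(t,t'))\to 0$ as $t'\to t$, which uses the continuity of $\lambda$. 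The uniform bound $\sup_{t,\alpha}\tau_{t,\alpha}(\T)<\infty$ follows because $\varphi_t(0)=\lambda(I(0,t))/(2+\lambda(I(0,t)))$ stays bounded away from $1$ uniformly in $t$.

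Finally, suppose for contradiction that the conclusion of the Lemma fails. Then there exist $\epsilon_0>0$ and sequences $t_n\in[0,1]$, $\alpha_n\in\T$, arcs $I_n$ with $m(I_n)\to 0$ but $\tau_{t_n,\alpha_n}(I_n)>\epsilon_0$. Passing to a subsequence, $(t_n,\alpha_n)\to(t_*,\alpha_*)$ and $I_n$ shrinks to some $\zeta_*\in\T$. For each $\rho>0$, fix a continuous $\phi_\rho\colon\T\to[0,1]$ equal to $1$ on the closed $\rho$-neighborhood of $\zeta_*$ and vanishing outside the closed $2\rho$-neighborhood; for $n$ large $\chi_{I_n}\le\phi_\rho$, so
\[
    \epsilon_0 \;<\; \tau_{t_n,\alpha_n}(I_n) \;\le\; \int_\T \phi_\rho\,d\tau_{t_n,\alpha_n}.
\]
Taking $n\to\infty$ and using (ii) gives $\epsilon_0\le\int_\T\phi_\rho\,d\tau_{t_*,\alpha_*}\le\tau_{t_*,\alpha_*}(\overline{B(\zeta_*,2\rho)})$; letting $\rho\to 0$ and invoking (i) to get $\tau_{t_*,\alpha_*}(\{\zeta_*\})=0$ yields $\epsilon_0\le 0$, a contradiction. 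The main obstacle is the no-atom property in step (i) for $\alpha\ne 1$, which hinges on the geometric confinement of $\varphi_t(\D)$ to a disc tangent to $\T$ at a single point; once this is established, the compactness conclusion is essentially formal.
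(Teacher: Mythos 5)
Your proof is correct and follows essentially the same route as the paper's: establish that every $\tau_{t,\alpha}$ is atomless (for $\alpha\neq 1$ via the confinement of $\varphi_t(\D)$ to a disc internally tangent to $\T$ at $1$), prove weak-$*$ continuity of $(t,\alpha)\mapsto\tau_{t,\alpha}$, and conclude by the same compactness/contradiction argument. The only differences are cosmetic: the paper deduces absolute continuity for $\alpha\neq 1$ from boundedness of $\R\frac{\alpha+\varphi_t}{\alpha-\varphi_t}$ rather than from the carrier of the singular part, and obtains weak-$*$ continuity via density of Poisson-kernel combinations rather than your subsequence-plus-uniqueness argument, while your explicit bump functions $\phi_\rho$ just spell out the portmanteau step the paper leaves implicit.
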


\begin{proof}
First of all we note that all the measures $\tau_{t,\alpha}$ are
indeed continuous, i.e.\ have no atoms.  For $\alpha = 1$ this is
clear from \eqref{eq:tau1def}.  For $\alpha \neq 1$ we need to note
that since the image of $\varphi_t$ does not touch $\alpha$, the
harmonic function
\begin{equation} \label{eq:ACt}
    \R \frac{\alpha+\varphi_t(z)}{\alpha-\varphi_t(z)}
       = \int_\T P_z \,d\tau_{t,\alpha},
\end{equation}
is bounded and hence $\tau_{t,\alpha}$ is absolutely continuous.

Using \eqref{eq:tau1def} and \eqref{eq:phi_t} one can easily show
that the left-hand side of \eqref{eq:ACt} is continuous as a function
of the pair $(t,\alpha)$ in $\cinterv{0,1}\times\T$.  Since linear
combinations of Poisson kernels
are dense among the continuous functions on $\T$, it follows that
the map $(t,\alpha) \mapsto \tau_{t,\alpha}$ is continuous in the
weak* sense.

Now assume that the claim of the lemma fails.  Then there are arcs
$I_n \subset \T$ and points $t_n \in \cinterv{0,1}$ and
$\alpha_n \in \T$ such that
$\tau_{t_n,\alpha_n}(I_n) > \epsil$ for all $n \geq 1$ while
$m(I_n) \to 0$.  By passing to a subsequence we may assume that the
intervals $I_n$ (i.e.\ their endpoints) converge to a point
$\zeta_0 \in \T$ and also that $t_n \to t_0$ and
$\alpha_n \to \alpha_0$.  Now for each $\eta > 0$ we have
$\tau_{t_n,\alpha_n}(I(e^{-i\eta}\zeta_0,e^{i\eta}\zeta_0))
> \epsil$ whenever $n$ is large enough.  Since the map
$(t,\alpha) \mapsto \tau_{t,\alpha}$ is weak* continuous, it
follows that
$\tau_{t_0,\alpha_0}(I(e^{-i\eta}\zeta_0,e^{i\eta}\zeta_0)) \geq
\epsil$ for all $\eta > 0$, and hence
$\tau_{t_0,\alpha_0}(\{\zeta_0\}) \geq \epsil$.  This is a
contradiction since we observed that $\tau_{t_0,\alpha_0}$ cannot
have atoms.
\end{proof}

\begin{lemma} \label{le:rho}
Fix $t_0 \in \cinterv{0,1}$ and let $I_0 \subset \T$ be an arc whose
midpoint is $e^{2\pi it_0}$.  If $\epsil > 0$ is given, there
exists $\delta > 0$ such that
\[
    \rho(\varphi_{t_0}(\zeta),\varphi_t(\zeta)) \leq \epsil
    \quad\text{for $\zeta \in \T \setminus I_0$}
\]
whenever $\abs{t_0-t} \leq \delta$.
\end{lemma}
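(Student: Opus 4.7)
The plan is to use the M\"obius isometry $\mu\colon \D\to\Hplane$, $\mu(w)=(1+w)/(1-w)$, which by \eqref{eq:phi_t} sends $\varphi_t$ to $H\tau_{t,1}$; thus $\rho(\varphi_{t_0}(\zeta),\varphi_t(\zeta)) = \rho_\Hplane(H\tau_{t_0,1}(\zeta),H\tau_{t,1}(\zeta))$ whenever both boundary values are in $\Hplane$. The key observation is that
\[
    H\tau_{t,1}-H\tau_{t_0,1} = H\sigma_t, \qquad
    \sigma_t := (\chi_{I(0,t)}-\chi_{I(0,t_0)})\lambda,
\]
where $\sigma_t$ is a signed measure supported in the short arc $I(0,t)\triangle I(0,t_0)$ near $e^{2\pi it_0}$ with total variation $\lambda(I(0,t)\triangle I(0,t_0))\to 0$ as $t\to t_0$, by the continuity of $\lambda$.

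Next I would pick $\delta_1>0$ so small that for $\abs{t-t_0}\leq\delta_1$ the support of $\sigma_t$ lies in the subarc of $I_0$ concentric with $I_0$ of half the arc length. Then, for every $\zeta \in \T \setminus I_0$ and every $\alpha \in \operatorname{supp}\sigma_t$, the distance $\abs{\alpha-\zeta}$ is at least a constant multiple of $m(I_0)$, so the Herglotz kernel $\abs{(\alpha+\zeta)/(\alpha-\zeta)}$ is uniformly bounded by $C/m(I_0)$. Since $H\sigma_t$ extends holomorphically across $\T\setminus I_0$, its boundary value there is given by the obvious absolutely convergent integral, yielding
\[
    \abs{H\sigma_t(\zeta)} \leq \frac{C\,\lambda(I(0,t)\triangle I(0,t_0))}{m(I_0)}
\]
uniformly in $\zeta\in\T\setminus I_0$.

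To convert this absolute estimate into the hyperbolic bound, I would use that $\tau_{t,1}\geq m$ forces $\R H\tau_{t,1}\geq 1$ everywhere on $\D$; hence both boundary values lie, when finite, in the closed half-plane $\{\R z\geq 1\}$, and so does the segment joining them, which gives $\rho_\Hplane(H\tau_{t,1}(\zeta),H\tau_{t_0,1}(\zeta))\leq \abs{H\sigma_t(\zeta)}$. Choosing $\delta\leq\delta_1$ small enough that $\lambda(I(0,t)\triangle I(0,t_0))\leq\epsil\,m(I_0)/C$ for $\abs{t-t_0}\leq\delta$ then completes the argument. The main delicate point is the boundary behavior at points where $H\tau_{t,1}(\zeta)=\infty$, i.e.\ $\varphi_t(\zeta)=1$, which occur on the singular support of $\chi_{I(0,t)}\lambda$; however, for $\abs{t-t_0}\leq\delta_1$ the singular parts of $\tau_{t,1}$ and $\tau_{t_0,1}$ coincide on $\T\setminus I_0$, so at such $\zeta$ one has $\varphi_t(\zeta)=\varphi_{t_0}(\zeta)=1$ simultaneously and the asserted inequality holds trivially.
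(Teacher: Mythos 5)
Your argument is correct and is essentially the paper's own proof: the same decomposition $H\tau_{t,1}-H\tau_{t_0,1}=\pm H(\chi_{J_t}\lambda)$ with $J_t$ the short arc between $e^{2\pi it_0}$ and $e^{2\pi it}$, the same kernel bound $\abs{H(\chi_{J_t}\lambda)(\zeta)}\leq C\lambda(J_t)/\operatorname{dist}(\zeta,J_t)$ for $\zeta\in\T\setminus I_0$, the same conversion to a hyperbolic estimate via $\R H\tau_{t,1}\geq 1$, and the same conformal invariance to return to $\rho$ on $\D$. Your extra remark about the points where $\varphi_t(\zeta)=1$ is a harmless refinement of a detail the paper leaves implicit, not a different route.
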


\begin{proof}
Assume that $\abs{t_0-t}$ is so small that the distance of the point
$e^{2\pi it}$ to the set $\T \setminus I_0$ is greater than a
positive constant $c$.  Then $H\tau_{t,1} = H\tau_{t_0,1}
\pm H(\chi_{J_t}\lambda)$, where $J_t \subset \T$ is the arc
connecting $e^{2\pi it_0}$ to $e^{2\pi it}$.  Moreover,
for $\zeta \in \T \setminus I_0$ we have
\[
    \abs{H(\chi_{J_t}\lambda)(\zeta)}
       = \biggabs{\int_{J_t}\frac{\xi+\zeta}{\xi-\zeta}\,d\lambda(\xi)}
       \leq \frac{2}{c} \lambda(J_t).
\]
Since this upper bound tends to zero as $t \to t_0$ and
$\R H\tau_{t_0,1} \geq 1$, we see that the distance between
$H\tau_{t,1}(\zeta)$ and $H\tau_{t_0,1}(\zeta)$ in the hyperbolic
metric of the right half-plane tends to zero as $t \to t_0$, uniformly
for $\zeta \in \T \setminus I_0$.  In view of \eqref{eq:phi_t} and the
conformal invariance of the hyperbolic metric, the same conclusion
holds true for the distance of $\varphi_t(\zeta)$ and
$\varphi_{t_0}(\zeta)$ in the metric $\rho$.
\end{proof}

We are now ready to prove the continuity of the map
$t \mapsto C_{\varphi_t}$ with respect to the operator norm
on $\mathcal{M}$.  Let $0 < \epsil < 1$.  By Lemma~\ref{le:tauMeasure}
we can find $\delta > 0$ such that $\tau_{t,\alpha}(I) \leq
\epsil$ for all $t \in [0,1]$ and $\alpha \in \T$ whenever
$I \subset \T$ is an arc with $m(I) \leq \delta$.
For all such $I$, equation~\eqref{eq:MNorm} yields the estimate
\begin{equation} \label{eq:Est1}
    \norm{\chi_I C_{\varphi_t}} \leq \epsil.
\end{equation}
(Here and throughout the rest of the proof $\norm{\ }$ refers to
the operator norm on $\mathcal{M}$.)

Now fix $t_0 \in [0,1]$ and pick an arc $I_0 \subset \T$ with
$m(I_0) \leq \delta$ whose midpoint is $e^{2\pi it_0}$.  By
Lemma~\ref{le:rho} there exists $\eta > 0$ such that if
$\abs{t_0-t} \leq \eta$, then
$\rho(\varphi_{t_0}(\zeta),\varphi_t(\zeta)) \leq \epsil$ for
all $\zeta \in \T \setminus I_0$. Hence Lemma~\ref{le:MDiff}
shows that
\begin{equation} \label{eq:Est2}
    \norm{\chi_{\T\setminus I_0} (C_{\varphi_{t_0}}-C_{\varphi_t})}
       \leq C\epsil/(1-\abs{\varphi_{t_0}(0)})
\end{equation}
whenever $\abs{t_0-t} \leq \eta$.  To finish the argument we just
write
\[
    C_{\varphi_{t_0}}-C_{\varphi_t}
       = \chi_{I_0}C_{\varphi_{t_0}} - \chi_{I_0}C_{\varphi_t} +
         \chi_{\T\setminus I_0}(C_{\varphi_{t_0}}-C_{\varphi_t})
\]
and, when $\abs{t_0-t} \leq \eta$, invoke estimates \eqref{eq:Est1}
and \eqref{eq:Est2} to conclude that
\[
    \norm{C_{\varphi_{t_0}}-C_{\varphi_t}}
    \leq \epsil + \epsil + C \epsil/(1-\abs{\varphi_{t_0}(0)}).
\]
Since $\epsil > 0$ was arbitrary, this clearly shows that the
norm of $C_{\varphi_{t_0}}-C_{\varphi_t}$ on $\mathcal{M}$ tends to
zero as $t \to t_0$.

The proof of Main Theorem is now complete.

\begin{remark}\label{re:Heur}
We try to describe the heuristics behind the above construction.
First of all, one can easily show that if a continuous path
$(C_{\varphi_t})$ yielding the desired example exists, then one may
assume that the image of each map $\varphi_t$ is contained in the
disc $\{w : \abs{w-\tfrac{1}{2}} \leq \tfrac{1}{2}\}$. Then
$\tau_{1,1}$ is necessarily of the form $g\,dm + \lambda$ where
$g \geq 1$ and $\lambda$ is non-trivial and continuously singular.
One may also assume that $\varphi_0\equiv 0$.
The central issue then is to find the intermediate maps $\varphi_t$
for $0 < t < 1$.  A seemingly natural choice might be
$\varphi_t=(1-t)\varphi_0+t\varphi_1$, but this obviously fails to
work since each such map is compact.  On the other hand, in certain
applications to spectral theory one proceeds by considering the maps
corresponding to the Aleksandrov--Clark measures
$\tau_{t,1}=(1-t)\tau_{0,1}+t\tau_{1,1}$.
However, Theorem~\ref{thm:MacExt} suggests that this approach might
not work either.  Namely, in the case of a discrete
singular part, Theorem~\ref{thm:MacExt} shows that if one makes
a simultaneous change---no matter how small---to all the mass points
of the singular part, then this induces a big difference in the
corresponding composition operator.  These considerations were
behind our actual choice \eqref{eq:tau1def}, where the singularity
$\lambda$ is continuously ``wiped off'' in such a way that the
change in $\tau_{t,1}$ is strictly local at every instant $t$.
\end{remark}

\section{Further remarks and open problems}
\label{sec:Further}

After the work of Section~\ref{sec:Main} it is natural to search for a
larger class of composition operators that could be continuously joined
to the compacts.  For instance, one might be tempted to expect a
positive answer to the following question:
\begin{itemize}
\item
Assume that $\varphi$ and $\alpha_0 \in \T$ are such that the
measure $\tau_{\varphi,\alpha_0}$ has no atoms and, for all
$\alpha \neq \alpha_0$, the measure $\tau_{\varphi,\alpha}$ is
absolutely continuous. Does it follow that $C_\varphi$ belongs to
$\Comp_K(\mathcal{H}^2)$?
\end{itemize}
The answer to this question is, however, negative.

\begin{ejemplo} \label{ex:Isol}
There is a symbol $\psi$ such that $C_\psi$ is isolated in
$\Comp(\mathcal{H}^2)$ and the following properties hold:\
$\tau_{\psi,1}$ has a continuous non-trivial singular part while
all the other measures $\tau_{\psi,\alpha}$ are absolutely
continuous.  In fact, one may choose
$\psi = \varphi \circ \sigma$, where $\sigma$ is
an inner function and $\varphi$ is a conformal map from
$\D$ onto a region $\Omega\subset\D$ with
$\overline{\Omega} \cap \T = \{1\}$.
\end{ejemplo}

The above example is based on a construction of Shapiro and
Sundberg~\cite{ShSuIso}.  We first recall some terminology. Shapiro
and Sundberg call a continuous and $2\pi$-periodic function
$\kappa\colon \mathbb{R} \to \cointerv{0,1}$ a \emph{contact function}
if it is increasing and positive on $\ocinterv{0,\pi}$, decreasing and
positive on $\cointerv{-\pi,0}$ and vanishes at the origin.  Such a
function determines an approach region
\[
    \Omega(\kappa)
    = \{ re^{i\theta}: 0 \leq r < 1-\kappa(\theta) \},
\]
whose boundary is a Jordan curve in $\closure{\D}$ that meets the unit
circle only at the point $1$.  In this setting Shapiro and Sundberg
prove the following (see Theorem~4.1 and Remark~5.1 of \cite{ShSuIso}).

\begin{theorem}[Shapiro--Sundberg] \label{thm:SSIsol}
Suppose $\kappa$ is a $C^2$ contact function and $\varphi$ is a
conformal map from $\D$ onto $\Omega(\kappa)$.  If
\begin{equation} \label{eq:Extremal}
    \int_0^\pi \log\kappa(\theta)\,d\theta = -\infty,
\end{equation}
then $C_\varphi$ is (essentially) isolated in $\Comp(\mathcal{H}^2)$.
\end{theorem}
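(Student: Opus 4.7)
The plan is to prove essential isolation by establishing, for a suitable constant $c=c(\varphi)>0$, the lower bound $\norm{C_\varphi-C_\psi}_e\geq c$ whenever $\psi\colon\D\to\D$ is an analytic map for which $C_\varphi-C_\psi$ fails to be compact on $\mathcal{H}^2$. The first step is to analyse the Aleksandrov--Clark measures of the conformal map $\varphi$. Since $\closure{\Omega(\kappa)}\cap\T=\{1\}$, the real part of $(\alpha+\varphi)/(\alpha-\varphi)$ is bounded on $\D$ for every $\alpha\neq 1$, so $\tau_{\varphi,\alpha}$ is absolutely continuous for such $\alpha$. At $\alpha=1$, the singular part $\tau_{\varphi,1}^s$ is carried by $E=\{\zeta\in\T:\varphi(\zeta)=1\}$. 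The $C^2$ smoothness of $\kappa$ combined with the Julia--Carath\'{e}odory theorem forces $\varphi$ to have infinite angular derivative at every point of $E$, so $\tau_{\varphi,1}^s$ has no atoms, while a Beurling-type harmonic-measure estimate in the cuspidal region $\Omega(\kappa)$ converts the extremal hypothesis \eqref{eq:Extremal} into the quantitative statement $\sigma_0:=\norm{\tau_{\varphi,1}^s}>0$.

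The main step is a continuous-singular refinement of Theorem~\ref{thm:MacExt}, roughly of the form $\norm{C_\varphi-C_\psi}_e^2\geq c_0\,\bigabs{\tau_{\varphi,1}^s-\tau_{\psi,1}^s}(\T)$ for some universal $c_0>0$. The strategy mirrors the argument of Section~\ref{sec:Mac}. In place of a single reproducing kernel $f_{(1-r)e^{i\kappa r}}$ I would use an $L^2$-normalised superposition $F_r=c_r\int_\T f_{(1-r)\overline{\xi}\,e^{i\kappa r}}\,d\tau_{\varphi,1}^s(\xi)$; by Lemma~\ref{le:Kernel}(i) the contribution to $C_\varphi F_r$ concentrates on short arcs of length $\asymp r$ near each source point of $E$, while by Lemma~\ref{le:Kernel}(ii) the cross-terms between distinct source points decay as $\kappa\to\infty$. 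On those same arcs $C_\psi F_r$ contributes either negligibly (where $\tau_{\psi,1}^s$ has no mass) or at a strictly different rate (where the local behaviour of $\tau_{\psi,1}^s$ differs from that of $\tau_{\varphi,1}^s$). Passing to the limits $r\to 0$, then $\kappa,\lambda\to\infty$, and $\delta\to 0$ yields the desired lower bound. An analogous but easier argument at each $\alpha\neq 1$ forces $\tau_{\psi,\alpha}^s=0$, and the combination shows that $\norm{C_\varphi-C_\psi}_e<c_0\sqrt{\sigma_0}$ implies matching singular parts for every $\alpha$; the Nieminen--Saksman characterisation from \cite{NiSa} (together with the $L^1$-to-$\mathcal{H}^2$ interpolation used in Section~\ref{sec:Main}) then gives the compactness of $C_\varphi-C_\psi$.

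The principal obstacle is the continuous-singular analogue of Theorem~\ref{thm:MacExt} itself. In the discrete case the sharp separation of atoms keeps the integration windows $\cinterv{t_k+\kappa ra_k-\lambda r,\;t_k+\kappa ra_k+\lambda r}$ mutually disjoint for small $r$, but a continuous singular measure can concentrate arbitrarily close to any point, so the windows overlap before the $r\to 0$ limit can be taken. I would handle this by a Vitali-style approximation: replace $\tau_{\varphi,1}^s$ in total variation by a finite atomic measure $\sum a_k\delta_{\zeta_k}$ whose $r$-windows are mutually disjoint for some small $r$, apply the discrete argument to this approximation, and let $r\to 0$ along a diagonal in the approximation error. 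A secondary technical point is the self-contained derivation of $\sigma_0>0$ from \eqref{eq:Extremal}; the $C^2$ hypothesis supplies enough control on $\varphi'$ near $E$ for a direct Herglotz-formula estimate to convert the extremal divergence into positivity of the total singular mass.
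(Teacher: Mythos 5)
Your scheme has a structural gap that no amount of technical work on the Aleksandrov--Clark side can close: what you outline yields at best the dichotomy ``either $\norm{C_\varphi-C_\psi}_e\geq c_0\sqrt{\sigma_0}$ or $C_\varphi-C_\psi$ is compact,'' and that is not essential isolation. To conclude you would still have to exclude analytic maps $\psi\neq\varphi$ whose difference with $\varphi$ is compact (essential distance zero), and that exclusion is exactly where the hypothesis \eqref{eq:Extremal} does its work in the actual argument. Shapiro and Sundberg (as recalled in the paper's proof of Proposition~\ref{prop:IsolExt}) build weakly null test functions $f_n$ tied to the geometry of $\Omega(\kappa)$ --- large on arcs $\Gamma_n\subset\boundary\Omega$ shrinking to $1$, bounded off small tents $T_n$ --- which give $\norm{C_\varphi-C_\eta}_e^2\geq c\,\limsup_n m(E_n)/m(J_n)$ for \emph{every} analytic $\eta\neq\varphi$, and then use \eqref{eq:Extremal} together with $\int_\T\log\abs{\varphi-\eta}\,dm>-\infty$ to force the limit superior to equal $1$. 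In your proposal \eqref{eq:Extremal} is used only to produce positive singular mass $\sigma_0$, i.e.\ non-compactness of $C_\varphi$; that is not its role, and with that use alone the theorem cannot follow.

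The measure-theoretic picture you start from is also wrong for this symbol. Since $\varphi$ is a conformal map onto a Jordan subdomain with $\closure{\Omega}\cap\T=\{1\}$, Carath\'eodory extension shows $\{\zeta\in\T:\varphi(\zeta)=1\}$ is a single point, and $\tau_{\varphi,1}^s$, being carried by that point, is either zero or a single atom; your simultaneous claims ``no atoms'' and ``$\norm{\tau_{\varphi,1}^s}>0$'' are mutually contradictory here, so the continuous-singular refinement of Theorem~\ref{thm:MacExt} is beside the point for this $\varphi$. Moreover, the proposed proof of that refinement would not work in any case: Lemma~\ref{le:Kernel}(i) is an asymptotic statement about the symbol having a finite angular derivative (an atom of its own Clark measure), and replacing $\tau_{\varphi,1}^s$ in total variation by a nearby atomic measure changes neither $\varphi$ nor the functions $C_\varphi f_w$, so the Vitali-style approximation and diagonal limit have no content --- for a genuinely continuous singular part, case (ii) of the lemma applies at every point and the integration windows pick up no mass. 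Finally, the closing reduction is also unsupported: the Nieminen--Saksman result characterizes compact differences on $\mathcal{M}$ and $L^1$, the criterion is not ``equal singular parts,'' and compactness of a difference does not transfer to $\mathcal{H}^2$ from such Clark-measure data in the way your last step needs. The proof really does require the test-function construction adapted to $\Omega(\kappa)$ and the extremality condition, which is the route the cited Shapiro--Sundberg argument takes.
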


We observe that this theorem can be extended as follows.

\begin{proposition} \label{prop:IsolExt}
Let $\varphi$ be a function given by Theorem~\ref{thm:SSIsol}, and
let $\sigma$ be an inner function with $\sigma(0) = 0$.  Put
$\psi = \varphi \circ \sigma$.  Then $C_\psi$ is
(essentially) isolated in $\Comp(\mathcal{H}^2)$.
\end{proposition}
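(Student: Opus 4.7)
The plan is to reduce to Theorem \ref{thm:SSIsol} by exploiting the factorization $C_\psi = C_\sigma C_\varphi$. The key fact is a classical result of Nordgren: since $\sigma$ is inner with $\sigma(0)=0$, $C_\sigma$ is an isometry on $\mathcal{H}^2$, so $C_\sigma^* C_\sigma = I$, whence $C_\sigma^* C_\psi = C_\varphi$. For any analytic self-map $\eta\colon \D\to\D$ this gives
\[
   \norm{C_\varphi - C_\sigma^* C_\eta}_e
   = \norm{C_\sigma^*(C_\psi - C_\eta)}_e
   \leq \norm{C_\psi - C_\eta}_e.
\]
Let $c>0$ be the essential-norm isolation constant of $C_\varphi$ supplied by Theorem \ref{thm:SSIsol}. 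If $\eta\neq\psi$ satisfies $\norm{C_\psi - C_\eta}_e < c$, then $C_\sigma^* C_\eta$ lies within essential distance $c$ of $C_\varphi$, and the goal is to derive a contradiction.

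The main obstacle is that $C_\sigma^* C_\eta$ is, in general, not a composition operator, so Theorem \ref{thm:SSIsol} does not apply to it directly. I would address this through the Aleksandrov--Clark framework of Section \ref{sec:AC}. The inner hypothesis on $\sigma$, together with $\sigma(0)=0$, yields Aleksandrov's disintegration $m = \int_\T \tau_{\sigma,\beta}\,dm(\beta)$; combining this with the integral representation \eqref{eq:IntOper} for $C_\eta$, one can express $C_\sigma^* C_\eta$ as a weak integral over $\beta \in \T$ of composition-type operators encoded by the branches of $\sigma^{-1}$. Specifically, the hope is that the essential-norm bound $\norm{C_\varphi - C_\sigma^* C_\eta}_e < c$, combined with Theorem \ref{thm:SSIsol} applied after an averaging/disintegration step, forces $\eta(\zeta) = \varphi(\sigma(\zeta)) = \psi(\zeta)$ for $m$-a.e.\ $\zeta \in \T$, and hence $\eta = \psi$.

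I expect the main difficulty to be the transfer of Shapiro--Sundberg's lower bound through the disintegration of $C_\sigma^* C_\eta$: one must verify that no cancellation between the different $\beta$-branches destroys the pointwise estimate from Theorem \ref{thm:SSIsol}. The extremal condition~\eqref{eq:Extremal} is presumably rigid enough to survive this averaging, but this is the step that will need the most care.

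As a cleaner parallel strategy, I would also attempt to redo Shapiro--Sundberg's test-function argument for $\psi$ directly. Since $\psi(\D) = \varphi(\sigma(\D)) \subseteq \Omega(\kappa)$, the image inherits the extremal contact geometry at $1$ encoded by~\eqref{eq:Extremal}; moreover the Aleksandrov--Clark measures $\tau_{\psi,\alpha} = C_\sigma \tau_{\varphi,\alpha}$ are absolutely continuous for $\alpha \neq 1$ while $\tau_{\psi,1}$ retains the continuous singular behaviour responsible for isolation. The isometric property of $C_\sigma$ ensures that the normalized test functions used in the proof of Theorem \ref{thm:SSIsol} can be pulled back through $\sigma$ without loss of norm, and the geometric lower bounds driving their argument should then reproduce verbatim and yield the essential isolation of $C_\psi$.
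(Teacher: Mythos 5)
Your primary route (via $C_\psi = C_\sigma C_\varphi$ and $C_\sigma^* C_\psi = C_\varphi$) has a genuine gap exactly where you flag it, and I do not see how to close it. The inequality $\norm{C_\varphi - C_\sigma^* C_\eta}_e \le \norm{C_\psi - C_\eta}_e$ is fine, but Theorem~\ref{thm:SSIsol} only separates $C_\varphi$ from \emph{other composition operators}, whereas $C_\sigma^* C_\eta$ is not one: via the Aleksandrov disintegration it is an average over the fibres of $\sigma$ of branched, generally non-composition operators, and the Shapiro--Sundberg lower bound lives on sets of circle measure where pointwise estimates can be destroyed by precisely this averaging. You offer no mechanism that forces $\eta = \psi$ from $\norm{C_\varphi - C_\sigma^* C_\eta}_e < c$; the ``rigidity'' you hope condition~\eqref{eq:Extremal} provides under averaging is the entire content of what would have to be proved, so as written this is not a proof.

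Your ``cleaner parallel strategy'' is, however, essentially the paper's argument, and it does work -- with one precision. The paper keeps the Shapiro--Sundberg test functions $f_n$ unchanged (nothing is pulled back; one simply composes them with $\psi$ instead of $\varphi$) and sets $J_n' = \psi^{-1}(\Gamma_n) = \sigma^{-1}(J_n)$, $E_n' = \{\zeta \in J_n' : \abs{\psi(\zeta)-\eta(\zeta)} \ge c_n\}$. The single new ingredient is that an inner $\sigma$ with $\sigma(0)=0$ preserves normalized Lebesgue measure on $\T$, so $m(J_n') = m(J_n)$; this is the measure-theoretic content of the Nordgren isometry you invoke, and it is what keeps the lower bound $\abs{f_n\circ\psi - f_n\circ\eta}^2 \ge c/m(J_n) = c/m(J_n')$ on $E_n'$ correctly normalized. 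The final step, $\limsup_n m(E_n')/m(J_n') = 1$, then reproduces verbatim because Shapiro and Sundberg's argument for it uses only $\int_\T \log\abs{\psi-\eta}\,dm > -\infty$, which holds for any two distinct bounded analytic functions. Note that the geometric remark that $\psi(\D) \subseteq \Omega(\kappa)$ ``inherits the contact geometry'' is not the operative point (and by itself would not suffice, since $\psi$ is no longer univalent); the fibre-measure identity $m(\sigma^{-1}(J_n)) = m(J_n)$ is. With that substitution your second sketch is the paper's proof.
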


Let us note that an analytic self-map of $\D$ is an inner function
if and only if any (or all) of its Aleksandrov--Clark measures is
singular.  Therefore, to produce the symbol needed for
Example~\ref{ex:Isol}, we choose any inner function $\sigma$
vanishing at the origin whose Aleksandrov--Clark measure
$\tau_{\sigma,1}$ is continuously singular.
We then apply Proposition~\ref{prop:IsolExt} with the additional
requirement that $\varphi(1) = 1$.  It is relatively
easy to check that $\psi = \varphi \circ \sigma$ has the required
properties; in particular, $\tau_{\psi,1}$ cannot have atoms.

\begin{proof}[Proof of Proposition~\ref{prop:IsolExt}]
We start by recalling some ideas from the proof of
Theorem~\ref{thm:SSIsol}. Write $\Omega = \Omega(\kappa)$ for the
image of $\varphi$.  A crucial part of Shapiro's and Sundberg's
argument is the construction of a sequence of test functions
$f_n \in \mathcal{H}^2$ which converges to zero weakly in
$\mathcal{H}^2$.  Their functions satisfy the following
properties: $\abs{f_n}^2 \geq c/m(J_n)$ on $\Gamma_n$, where
$\Gamma_n \subset \boundary\Omega$ are arcs converging to $1$ and
$J_n = \varphi^{-1}(\Gamma_n)$; and $\abs{f_n} \leq 1$ on
$\D \setminus T_n$, where $T_n \subset \D$ is a set containing
$\Gamma_n$ whose diameter is roughly twice the length of $\Gamma_n$.
Now suppose that $\eta\colon \D \to \D$ is any analytic map different
from $\varphi$.  Shapiro and Sundberg consider the sets
$E_n = \{ \zeta \in J_n: \abs{\varphi(\zeta)-\eta(\zeta)} \geq c_n\}$
where $c_n$ is approximately twice the diameter of $T_n$.  They
observe that for $\zeta \in E_n$ one has $\varphi(\zeta) \in \Gamma_n$
and $\eta(\zeta) \in \D \setminus T_n$.  Therefore
$\abs{f_n\circ\varphi - f_n\circ\eta}^2 \geq c/m(J_n)$ on $E_n$.
Since $f_n \to 0$ weakly, this yields the estimate
\[
    \norm{C_\varphi-C_\eta}_e^2
       \geq c \limsup_{n\to\infty} \frac{m(E_n)}{m(J_n)}.
\]
Finally Shapiro and Sundberg show that $\limsup m(E_n)/m(J_n) = 1$,
based simply on the fact that
$\int_\T \log\abs{\varphi-\eta} \,dm > -\infty$.

Our argument is just a minor adaptation of the one explained above.
Suppose that $\eta\colon \D \to \D$ is an analytic map different
from $\psi$, and put $J_n' = \psi^{-1}(\Gamma_n)$ and
$E_n' = \{ \zeta \in J_n' : \abs{\psi(\zeta)-\eta(\zeta)} \geq c_n\}$.
Then $J_n' = \sigma^{-1}(J_n)$, and since $\sigma$ is an inner
function fixing the origin, we have $m(J_n') = m(J_n)$.
Thus, using the test functions $f_n$ as before, we arrive at the
estimate
\[
    \norm{C_\psi-C_\eta}_e^2
       \geq c \limsup_{n\to\infty} \frac{m(E_n')}{m(J_n')}.
\]
The proof is now completed by using the same argument as
Shapiro and Sundberg to show that the limit superior here
equals $1$.
\end{proof}

Given the above example, it seems appropriate to close this
section with the following general open problem.

\begin{problem}
Determine all the non-compact composition operators in
$\Comp_K(\mathcal{H}^2)$.
\end{problem}

This problem might be quite hard.  As a first step one could try to
describe interesting subsets of $\Comp_K(\mathcal{H}^2)$ that are
larger than those provided by obvious modifications of our
construction presented in Section~\ref{sec:Main}.  For instance, it
would be instructive to know if the extremality condition
\eqref{eq:Extremal} that was essential for the example provided
by Proposition~\ref{prop:IsolExt} can be relaxed.


\begin{thebibliography}{99}

\bibitem{AGL}
R.\ Aron, P.\ Galindo and M.\ Lindstr\"om,
\emph{Connected components in the space of composition operators
in $H^\infty$ functions of many variables}, Integral Equations
Operator Theory \textbf{45} (2003), 1--14.

\bibitem{BeSh}
C.\ Bennett and R.\ Sharpley, \emph{Interpolation of Operators},
Academic Press, Boston, 1988.

\bibitem{Be}
E.\ Berkson, \emph{Composition operators isolated in the uniform
operator norm}, Proc.\ Amer.\ Math.\ Soc.\ {\bf 81} (1981), 230--232.

\bibitem{Bo}
P.\ S.\ Bourdon, \emph{Components of linear-fractional composition
operators}, J.\ Math.\ Anal.\ Appl.\ {\bf 279} (2003), 228--245.


\bibitem{CiMa}
J.\ A.\ Cima and A.\ L.\ Matheson, \emph{Essential norms of
composition operators and Aleksandrov measures}, Pacific J.\
Math.\ {\bf 179} (1997), 59--64.

\bibitem{CMR}
J.\ A.\ Cima, A.\ L.\ Matheson and W.\ T.\ Ross, \emph{The Cauchy
Transform}, Amer.\ Math.\ Soc., Providence, 2006.

\bibitem{CMc}
C.\ C.\ Cowen and B.\ D.\ MacCluer, \emph{Composition Operators on
Spaces of Analytic Functions}, CRC Press, Boca Raton, 1995.

\bibitem{Du}
P.\ L.\ Duren, \emph{Theory of $H^p$ spaces}, Academic Press,
New York, 1970; reprinted by Dover, New York, 2000.

\bibitem{Ga}
J.\ B.\ Garnett, \emph{Bounded Analytic Functions}, Academic Press,
New York, 1981; revised ed.\ by Springer, New York, 2007.

\bibitem{HaMac}
C.\ Hammond and B.\ D.\ MacCluer, \emph{Isolation and component
structure in spaces of composition operators}, Integral Equations
Operator Theory \textbf{53} (2005), 269--285.

\bibitem{HO}
T.\ Hosokawa and S.\ Ohno, \emph{Topological structures of
the sets of composition operators on the Bloch spaces}, J.~Math.\
Anal.\ Appl.\ \textbf{314} (2006), 736--748.


\bibitem{KrMo}
T.\ Kriete and J.\ Moorhouse, \emph{Linear relations in the Calkin
algebra for composition operators}, Trans.\ Amer.\ Math.\ Soc.\
{\bf 359} (2007), 2915--2944.


\bibitem{Mc}
B.\ D.\ MacCluer, \emph{Components in the space of composition
operators}, Integral Equations Operator Theory {\bf 12} (1989),
725--738.

\bibitem{McOhZh}
B.\ MacCluer, S.\ Ohno and R.\ Zhao, \emph{Topological structure
of the space of composition operators on $H^{\infty}$}, Integral
Equations Operator Theory {\bf 40} (2001), 481--494.

\bibitem{McSh}
B.\ D.\ MacCluer and J.\ H.\ Shapiro,
\emph{Angular derivatives and compact composition operators on the
Hardy and Bergman spaces}, Canad.\ J.\ Math.\ \textbf{38} (1986)
878--906.

\bibitem{MaSt}
A.\ Matheson and M.\ Stessin, \emph{Applications of
spectral measures}, Recent Advances in Operator-Related
Function Theory, Contemp.\ Math.\ \textbf{393} (2006), 15--27.

\bibitem{Mo}
J.\ Moorhouse, \emph{Compact differences of composition operators},
J.\ Funct.\ Anal.\ {\bf 219} (2005), 70--92.

\bibitem{MoTo}
J.\ Moorhouse and C.\ Toews, \emph{Differences of composition
operators}, Trends in Banach spaces and operator theory (Memphis,
TN, 2001), Contemp.\ Math.\ {\bf 321} (2003), 207--213.

\bibitem{NiSa}
P.\ J.\ Nieminen and E.\ Saksman, \emph{On compactness of
the difference of composition operators}, J.\ Math.\ Anal.\ Appl.\
{\bf 298} (2004), 501--522.

\bibitem{PoSa}
A.\ Poltoratski and D.\ Sarason, \emph{Aleksandrov--Clark
measures}, Recent Advances in Operator-Related Function Theory,
Contemp.\ Math.\ \textbf{393} (2006), 1--14.


\bibitem{Sa}
E.\ Saksman, \emph{An elementary introduction to Clark measures},
Topics in Complex Analysis and Operator Theory, Univ.\ M\'{a}laga,
2007, pp.~85--136.

\bibitem{Sar}
D.\ Sarason, \textit{Composition operators as integral operators},
Analysis and Partial Differential Equations, Lecture Notes in Pure
and Appl.\ Math., vol.~122, Dekker, New York, 1990, pp.\ 545--565.

\bibitem{JESh}
J.\ E.\ Shapiro, \emph{Aleksandrov measures used in essential norm
inequalities for composition operators}, J.\ Operator Theory
\textbf{40} (1998), 133--146.

\bibitem{ShEss}
J.\ H.\ Shapiro,  \emph{The essential norm of a composition operator},
Ann.\ of Math.\ {\bf 125} (1987), 375--404.

\bibitem{ShBook}
J.\ H.\ Shapiro, \emph{Composition Operators and Classical
Function Theory}, Springer, New York, 1993.


\bibitem{ShSuL1}
J.\ H.\ Shapiro and C.\ Sundberg, \emph{Compact composition operators
on $L^1$}, Proc.\ Amer.\ Math.\ Soc.\ \textbf{108} (1990), 443--449.

\bibitem{ShSuIso}
J.\ H.\ Shapiro and C.\ Sundberg, \emph{Isolation amongst the
composition operators}, Pacific J.\ Math.\ {\bf 145} (1990), 117--151.

\end{thebibliography}
\end{document}